\begin{document}
\theoremstyle{plain}
\newtheorem{thm}{Theorem}[section]
\newtheorem*{thm1}{Theorem 1}
\newtheorem*{thm1.1}{Theorem 1.1}
\newtheorem*{thmM}{Main Theorem}
\newtheorem*{thmA}{Theorem A}
\newtheorem*{thm2}{Theorem 2}
\newtheorem{lemma}[thm]{Lemma}
\newtheorem{lem}[thm]{Lemma}
\newtheorem{cor}[thm]{Corollary}
\newtheorem{pro}[thm]{Proposition}
\newtheorem{propose}[thm]{Proposition}
\newtheorem{variant}[thm]{Variant}
\theoremstyle{definition}
\newtheorem{notations}[thm]{Notations}
\newtheorem{rem}[thm]{Remark}
\newtheorem{rmk}[thm]{Remark}
\newtheorem{rmks}[thm]{Remarks}
\newtheorem{defi}[thm]{Definition}
\newtheorem{exe}[thm]{Example}
\newtheorem{claim}[thm]{Claim}
\newtheorem{ass}[thm]{Assumption}
\newtheorem{prodefi}[thm]{Proposition-Definition}
\newtheorem{que}[thm]{Question}
\newtheorem{con}[thm]{Conjecture}

\newtheorem*{smlprob}{Problem Skolem-Mahler-Lech}
\newtheorem*{pvprob}{Problem Picard-Vessiot}
\newtheorem*{tvcon}{Tate-Voloch Conjecture}
\newtheorem*{dmmcon}{Dynamical Manin-Mumford Conjecture}
\newtheorem*{dmlcon}{Dynamical Mordell-Lang Conjecture}
\newtheorem*{condml}{Dynamical Mordell-Lang Conjecture}
\numberwithin{equation}{section}
\newcounter{elno}                % This to number lists
\def\points{\list
{\hss\llap{\upshape{(\roman{elno})}}}{\usecounter{elno}}}
\let\endpoints=\endlist

\newcommand{\lra}{{\longrightarrow}}
\newcommand{\dra}{{\dashrightarrow}}
\newcommand{\Gr}{{\rm Gr}}
\newcommand{\GO}{{\rm GO}}
\newcommand{\Fan}{{(\F^{\an})}}
\newcommand{\Phian}{{(\Phi^{\an})}}
\newcommand{\lcm}{{\rm lcm}}
\newcommand{\Tor}{{\rm Tor}}
\newcommand{\perf}{{\rm perf}}
\newcommand{\ad}{{\rm ad}}
\newcommand{\Spa}{{\rm Spa}}
\newcommand{\Perf}{{\rm Perf}}
\newcommand{\alHom}{{\rm alHom}}
\newcommand{\SH}{\rm SH}
\newcommand{\Tan}{\rm Tan}
\newcommand{\res}{{\rm res}}
\newcommand{\Om}{\Omega}
\newcommand{\om}{\omega}
\newcommand{\OO}{\mathcal{O}}
\newcommand{\cO}{\mathfrak{O}}
\newcommand{\cm}{\mathfrak{m}}
\newcommand{\la}{\lambda}
\newcommand{\mc}{\mathcal}
\newcommand{\mb}{\mathbb}
\newcommand{\surj}{\twoheadrightarrow}
\newcommand{\inj}{\hookrightarrow}
\newcommand{\zar}{{\rm zar}}
\newcommand{\Exc}{\rm Exc}
\newcommand{\an}{{\rm an}}
\newcommand{\red}{{\rm \mathbf{red}}}
\newcommand{\codim}{{\rm codim}}
\newcommand{\Supp}{{\rm Supp}}
\newcommand{\rank}{{\rm rank}}
\newcommand{\Ker}{{\rm Ker \ }}
\newcommand{\Pic}{{\rm Pic}}
\newcommand{\Div}{{\rm Div}}
\newcommand{\Hom}{{\rm Hom}}
\newcommand{\im}{{\rm im}}
\newcommand{\Spec}{{\rm Spec \,}}
\newcommand{\Nef}{{\rm Nef \,}}
\newcommand{\Frac}{{\rm Frac \,}}
\newcommand{\Sing}{{\rm Sing}}
\newcommand{\sing}{{\rm sing}}
\newcommand{\reg}{{\rm reg}}
\newcommand{\Char}{{\rm char}}
\newcommand{\Fix}{{\rm Fix}}
\newcommand{\Tr}{{\rm Tr}}
\newcommand{\ord}{{\rm ord}}
\newcommand{\id}{{\rm id}}
\newcommand{\NE}{{\rm NE}}
\newcommand{\Gal}{{\rm Gal}}
\newcommand{\Min}{{\rm Min \ }}
\newcommand{\Max}{{\rm Max \ }}
\newcommand{\Alb}{{\rm Alb}\,}
\newcommand{\GL}{{\rm GL}\,}        % For the general linear group
\newcommand{\PGL}{{\rm PGL}\,}
\newcommand{\Bir}{{\rm Bir}}
\newcommand{\Aut}{{\rm Aut}}
\newcommand{\fp}{\mathfrak{p}}
\newcommand{\End}{{\rm End}}
\newcommand{\Per}{{\rm Per}\,}
\newcommand{\ie}{{\it i.e.\/},\ }
\newcommand{\niso}{\not\cong}
\newcommand{\nin}{\not\in}
\newcommand{\soplus}[1]{\stackrel{#1}{\oplus}}
\newcommand{\by}[1]{\stackrel{#1}{\rightarrow}}
\newcommand{\longby}[1]{\stackrel{#1}{\longrightarrow}}
\newcommand{\vlongby}[1]{\stackrel{#1}{\mbox{\large{$\longrightarrow$}}}}
\newcommand{\ldownarrow}{\mbox{\Large{\Large{$\downarrow$}}}}
\newcommand{\lsearrow}{\mbox{\Large{$\searrow$}}}
\renewcommand{\d}{\stackrel{\mbox{\scriptsize{$\bullet$}}}{}}
\newcommand{\dlog}{{\rm dlog}\,}    % For dlog
\newcommand{\longto}{\longrightarrow}
\newcommand{\vlongto}{\mbox{{\Large{$\longto$}}}}
\newcommand{\limdir}[1]{{\displaystyle{\mathop{\rm lim}_{\buildrel\longrightarrow\over{#1}}}}\,}
\newcommand{\liminv}[1]{{\displaystyle{\mathop{\rm lim}_{\buildrel\longleftarrow\over{#1}}}}\,}
\newcommand{\norm}[1]{\mbox{$\parallel{#1}\parallel$}}
\newcommand{\boxtensor}{{\Box\kern-9.03pt\raise1.42pt\hbox{$\times$}}}
\newcommand{\into}{\hookrightarrow}
\newcommand{\image}{{\rm image}\,}
\newcommand{\Lie}{{\rm Lie}\,}      % For Lie algebra of groups
\newcommand{\CM}{\rm CM}
\newcommand{\sext}{\mbox{${\mathcal E}xt\,$}}  % For sheaf Ext
\newcommand{\shom}{\mbox{${\mathcal H}om\,$}}  %For sheaf Hom
\newcommand{\coker}{{\rm coker}\,}  % For the cokernel of a morphism
\newcommand{\sm}{{\rm sm}}
\newcommand{\pgcd}{\text{pgcd}}
\newcommand{\trd}{\text{tr.d.}}
\newcommand{\tensor}{\otimes}
\renewcommand{\iff}{\mbox{ $\Longleftrightarrow$ }}
\newcommand{\supp}{{\rm supp}\,}
\newcommand{\ext}[1]{\stackrel{#1}{\wedge}}
\newcommand{\onto}{\mbox{$\,\>>>\hspace{-.5cm}\to\hspace{.15cm}$}}
\newcommand{\propsubset}
{\mbox{$\textstyle{
\subseteq_{\kern-5pt\raise-1pt\hbox{\mbox{\tiny{$/$}}}}}$}}
% Skriptbuchstaben
\newcommand{\sA}{{\mathcal A}}
\newcommand{\sB}{{\mathcal B}}
\newcommand{\sC}{{\mathcal C}}
\newcommand{\sD}{{\mathcal D}}
\newcommand{\sE}{{\mathcal E}}
\newcommand{\sF}{{\mathcal F}}
\newcommand{\sG}{{\mathcal G}}
\newcommand{\sH}{{\mathcal H}}
\newcommand{\sI}{{\mathcal I}}
\newcommand{\sJ}{{\mathcal J}}
\newcommand{\sK}{{\mathcal K}}
\newcommand{\sL}{{\mathcal L}}
\newcommand{\sM}{{\mathcal M}}
\newcommand{\sN}{{\mathcal N}}
\newcommand{\sO}{{\mathcal O}}
\newcommand{\sP}{{\mathcal P}}
\newcommand{\sQ}{{\mathcal Q}}
\newcommand{\sR}{{\mathcal R}}
\newcommand{\sS}{{\mathcal S}}
\newcommand{\sT}{{\mathcal T}}
\newcommand{\sU}{{\mathcal U}}
\newcommand{\sV}{{\mathcal V}}
\newcommand{\sW}{{\mathcal W}}
\newcommand{\sX}{{\mathcal X}}
\newcommand{\sY}{{\mathcal Y}}
\newcommand{\sZ}{{\mathcal Z}}
% Sonderbuchstaben mit Doppellinie
\newcommand{\A}{{\mathbb A}}
\newcommand{\B}{{\mathbb B}}
\newcommand{\C}{{\mathbb C}}
\newcommand{\D}{{\mathbb D}}
\newcommand{\E}{{\mathbb E}}
\newcommand{\F}{{\mathbb F}}
\newcommand{\G}{{\mathbb G}}
\newcommand{\HH}{{\mathbb H}}
\newcommand{\I}{{\mathbb I}}
\newcommand{\J}{{\mathbb J}}
\newcommand{\M}{{\mathbb M}}
\newcommand{\N}{{\mathbb N}}
\renewcommand{\P}{{\mathbb P}}
\newcommand{\Q}{{\mathbb Q}}
\newcommand{\R}{{\mathbb R}}
\newcommand{\T}{{\mathbb T}}
\newcommand{\U}{{\mathbb U}}
\newcommand{\V}{{\mathbb V}}
\newcommand{\W}{{\mathbb W}}
\newcommand{\X}{{\mathbb X}}
\newcommand{\Y}{{\mathbb Y}}
\newcommand{\Z}{{\mathbb Z}}

\newcommand{\fix}{\mathrm{Fix}}

% Makros from Michael:
\newcommand{\nn}{\mathbb{N}}
\newcommand{\seq}{\operatorname{Seq}}
\newcommand{\s}{\sigma}
\newcommand{\Gl}{\operatorname{GL}}

%%%%%%%%%%%%%%%%%%%%%%%%%%%%%%%%%%%%%%%%%%%%%%%%%%%%%%%%%%%%%%
%%%%%%%%%%%%%%%%%%%%%%%%%%%%%%%%%%%%%%%%%%%%%%%%%%%%%%%%%%%%%%
\title{The Dynamical Mordell-Lang Conjecture for skew-linear self-maps} 

\author{Dragos Ghioca and Junyi Xie, with an appendix by Michael Wibmer}

\address{
Dragos Ghioca\\
Department of Mathematics\\
University of British Columbia\\
Vancouver, BC V6T 1Z2\\
Canada
}
\email{dghioca@math.ubc.ca}

\address{Junyi Xie, IRMAR, Campus de Beaulieu,
b\^atiments 22-23
263 avenue du G\'en\'eral Leclerc, CS 74205
35042  RENNES C\'edex, France}
\email{junyi.xie@univ-rennes1.fr}

\address{Michael Wibmer\\
university of Pennsylvania\\
Department of Mathematics\\
David Rittenhouse Lab., 209 South 33rd Street\\
Philadelphia, PA 19104-6395\\
USA}
\email{wibmer@math.upenn.edu}

%\date{\today}

\begin{abstract}
Let $k$ be an algebraically closed field of characteristic $0$, let $N\in\N$, let $g:\P^1\lra \P^1$ be a non-constant morphism, and let $A:\A^N\lra \A^N$ be a linear transformation defined over $k(\P^1)$, i.e., for a Zariski open dense subset $U\subset \P^1$, we have that for $x\in U(k)$, the specialization $A(x)$ is an $N$-by-$N$ matrix with entries in $k$. We let $f:\P^1\times \A^N\dra \P^1\times \A^N$ be the rational endomorphism given by $(x,y)\mapsto (g(x), A(x)y)$. We prove that if $g$ induces an automorphism of $\A^1\subset \P^1$, then each irreducible curve $C\subset \A^1\times \A^N$ which intersects some orbit $\OO_f(z)$ in infinitely many points must be periodic under the action of $f$. Furthermore, in the case $g:\P^1\lra \P^1$ is an endomorphism of degree greater than $1$, then we prove that each irreducible subvariety $Y\subset \P^1\times \A^N$ intersecting an orbit $\OO_f(z)$ in a Zariski dense set of points must be periodic. Our results provide the desired conclusion in the Dynamical Mordell-Lang Conjecture in a couple new instances. Also, our results have interesting consequences towards a conjecture of Rubel and towards a generalized Skolem-Mahler-Lech problem proposed by Wibmer in the context of difference equations. In the appendix it is shown that the results can also be used to construct Picard-Vessiot extensions in the ring of sequences. 
\end{abstract}

\thanks{The first author is partially supported by a Discovery Grant from the Natural Sciences and Engineering Research Council of Canada, while the second author is partially supported by project "Fatou" ANR-17-CE40-0002-01.}
\thanks{2010 AMS Subject Classification: Primary 37P15; Secondary 37P05.}

\maketitle

%%%%%%%%%%%%%%%%%%%%%%%%%%%%%%%%%%%%%%%%%%%%%%%%%%%%%%%%%%%%%%%%%%%%%%%%%%%%%%%

\section{Introduction}

%%%%%%%%%%%%%%%%%%%%%%%%%%%%%%%%%%%%%%%%%%%%%%%%%%%%%%%%%%%%%%%%%%%%%%%%%%%%%%%

\subsection{Notation}

We let $\N_0:=\N\cup\{0\}$. In our paper, we allow an arithmetic progression to have common difference equal to $0$, in which case, the arithmetic progression is just a singleton.

Throughout our paper, we let $k$ be an algebraically closed field of characteristic $0$. Also, unless otherwise noted, all our subvarieties are assumed to be closed. In general, for a set $S$ contained in an algebraic variety $X$, we denote by $\overline{S}$ its Zariski closure.

For a variety $X$ defined over $k$ and endowed with a rational self-map $\Phi$, for any subvariety $V\subseteq X$, we define $\Phi(V)$ to be the Zariski closure of the set $\Phi\left(V\setminus I(\Phi)\right)$, where $I(\Phi)$ is the indeterminacy locus of $\Phi$; in other words, $\Phi(V)$ is the strict transform of $V$ under $\Phi$. Also, we denote by $\OO_\Phi(\alpha)$ the orbit of any point $\alpha\in X(K)$ under $\Phi$, i.e., the set of all $\Phi^n(\alpha)$ for $n\in \N_0$ (as always in algebraic dynamics, we denote by $\Phi^n$ the $n$-th compositional power of the map $\Phi$, where $\Phi^0$ is the identity map, by convention). We say that $\alpha$ is periodic if there exists $n\in\N$ such that $\Phi^n(\alpha)=\alpha$; we say that $\alpha$ is preperiodic if there exists $m\in\N_0$ such that $\Phi^m(\alpha)$ is periodic. More generally, for an irreducible subvariety $V\subset X$, we say that $V$ is periodic if $\Phi^n(V)= V$ for some $n\in\N$; if $\Phi(V)=V$ (i.e., $\overline{\Phi\left(V\setminus I(\Phi)\right)}=V$), we say that $V$ is invariant under the action of $\Phi$ (or simpler, invariant by $\Phi$).

We will also encounter the following setup in our paper. Given a variety $X$ (which we will call the \emph{base}) defined over $k$ and given $N\in\N$, we consider some $N$-by-$N$ matrix $A$ whose entries are rational functions on $X$; when the determinant of $A$ is  nonzero, then we write $A\in \GL_N(k(X))$. For each $N$-by-$N$ matrix $A\in M_{N,N}(k(X))$ there exists an open, Zariski dense subset $U\subset X$ such that for each $x\in U$, the matrix $A(x)$ obtained by evaluating each entry of $A$ at $x$ is well-defined. We call \emph{skew-linear self-map (over the base $X$)} a rational self-map $f:X\times \mathbb{A}^N\dashrightarrow X\times \mathbb{A}^N$ of the form $f(x, y)=(g(x), A(x)y)$, where $g:X\dashrightarrow X$ is a given rational self-map, while $A\in M_{N,N}(k(X))$. For more details regarding skew-linear self-maps and their application to arithmetic dynamics, we refer the reader to \cite{skew linear paper 1}. In our paper we will work with skew-linear self-maps over the base $X=\A^1$ or $X=\P^1$.

%%%%%%%%%%%%%%%%%%%%%%%%%%%%%%%%%%%%%%%%%%%%%%%%%%%%%%%%%%%%%%%%%%%%%%%%%%%%%%%

\subsection{The Dynamical Mordell Lang Conjecture}

%%%%%%%%%%%%%%%%%%%%%%%%%%%%%%%%%%%%%%%%%%%%%%%%%%%%%%%%%%%%%%%%%%%%%%%%%%%%%%%

Motivated by the famous Mordell-Lang conjecture (now a theorem, due to Faltings \cite{Faltings}), the Dynamical Mordell-Lang Conjecture (see \cite{Ghioca2009}) predicts the following: given a quasiprojective variety $X$ defined over $k$, endowed with an endomorphism $\Phi$, and also given a point $\alpha\in X(k)$ and a subvariety $Y\subset X$, the set $\left\{n\in\N_0\colon \Phi^n(\alpha)\in Y(k)\right\}$ 
is a finite union of arithmetic progressions. In the special case when $Y$ is  irreducible and of positive dimension (note that the conjecture is trivial when $\dim(Y)=0$), the Dynamical Mordell-Lang Conjecture is equivalent with asking that if $Y\cap\OO_\Phi(\alpha)$ is Zariski dense in $Y$, then $Y$ must be periodic (under the action of $\Phi$); for more details, see \cite[Chapter~3]{Bell2016}.

There are several partial results supporting this conjecture and no counterexamples known (see \cite{Bell2016} for a survey of known results prior to 2016). Almost all known results towards the Dynamical Mordell-Lang Conjecture (with only a few outstanding exceptions, such as the results from \cite{Bell2010} for all \'etale endomorphisms of any quasiprojective variety and the results from \cite{Xiec} for all endomorphisms of $\A^2$, which in turn, extend the results from \cite{Xie-Math}) are valid for \emph{split endomorphisms} only; more precisely, $X=(\P^1)^N$ and the endomorphism $\Phi$ is given by the coordinatewise action of $N$ rational functions $\varphi_i$, i.e.
$$(x_1,\dots, x_N)\mapsto \left(\varphi_1(x_1),\dots, \varphi_N(x_N)\right).$$

%%%%%%%%%%%%%%%%%%%%%%%%%%%%%%%%%%%%%%%%%%%%%%%%%%%%%%%%%%%%%%%%%%%%%%%%%%%%

\subsection{Our results}

%%%%%%%%%%%%%%%%%%%%%%%%%%%%%%%%%%%%%%%%%%%%%%%%%%%%%%%%%%%%%%%%%%%%%%%%%%%%

In this paper, we study the Dynamical Mordell-Lang Conjecture for skew-linear self-maps whose base is a rational curve. When the action on the base is linear, we have the following result (proven in Section~\ref{section proofs 1}).

\begin{thm}\label{thmdmlslao}
Let $f:\A^1_k\times\A^N_k\to \A^1_k\times\A^N_k$ be an endomorphism defined by $(x,y)\mapsto (g(x),A(x)y)$ where $g$ is an automorphism of $\A^1_k$, while  $A(x)$ is a matrix in $M_{N\times N}(k[x])$. Let $C$ be an irreducible curve in $\A^1_k\times\A^N_k$ and $\alpha$ be a point in $\A^1_k\times\A^N_k$. If  $\OO_f(\alpha)\cap C$ is infinite, then $C$ is periodic under $f$.
\end{thm}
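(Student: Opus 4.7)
The plan is to carry out a $p$-adic analytic interpolation of the iterates of $f$ in the spirit of Bell--Ghioca--Tucker. After the standard specialization reduction I may assume $k=\bar{\Q}$. Writing $g(x)=ax+b$ and $\alpha=(x_0,y_0)$, the iterate formula reads $f^n(\alpha)=(g^n(x_0),B_n(x_0)y_0)$ with $B_n(x):=A(g^{n-1}(x))\cdots A(g(x))A(x)$. I split the argument on whether the automorphism $g$ of $\A^1$ has finite or infinite order.

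If $g$ has finite order $m$, then $\pi_1(\OO_f(\alpha))\subset\{x_0,g(x_0),\dots,g^{m-1}(x_0)\}$ is finite. Since $C\cap\OO_f(\alpha)$ is infinite while each fiber intersection $C\cap(\{c\}\times\A^N)$ is finite whenever $\pi_1|_C$ is nonconstant, the curve $C$ must lie in a single fiber $\{c\}\times\A^N$ with $c=g^{n_0}(x_0)$ for some $n_0$. A direct computation yields $f^{n_0+mk}(\alpha)=(c,B_m(c)^k\beta)$ where $\beta:=B_{n_0}(x_0)y_0$, so the orbit of $\beta$ under the linear self-map $L:y\mapsto B_m(c)y$ of $\A^N$ intersects $C$ infinitely often. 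Skolem--Mahler--Lech (applied to the defining equations of $C$ evaluated along the linear orbit) then gives $L^d(C)=C$ for some $d\geq 1$, whence $C$ is periodic under $f$.

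If $g$ has infinite order, then $g^n(x_0)$ takes infinitely many values, so $\pi_1|_C$ is dominant. By a density argument I choose a prime $p$, an embedding $\bar{\Q}\hookrightarrow\C_p$, and an iterate exponent $m$ such that, after replacing $\alpha$ by a suitable $f^{n_0}(\alpha)$: (i) on a $p$-adic neighborhood $U$ of a fixed point $x^*$ of $g$ in $\P^1(\C_p)$ (either $b/(1-a)$ when $a\neq 1$, or $\infty$ via a local chart when $a=1$), the map $g^m$ is analytic and every iterate $g^{n_0+mt}(x_0)$ remains in $U$; (ii) the multiplier $a^m$ of $g^m$ at $x^*$ and every eigenvalue of $A(x^*)^m$ lie in $1+p^s\OO_{\C_p}$ with $s>1/(p-1)$, so that $(a^m)^t$ and $A(x^*)^{mt}$ admit $p$-adic analytic continuations to $t\in\Z_p$; and (iii) each factor $A(g^{n_0+mt+k}(x_0))$ of $B_{n_0+mt}(x_0)$ is a $p$-adic analytic perturbation of $A(x^*)$ controlled uniformly in $t$. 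Under these conditions $\gamma(t):=f^{n_0+mt}(\alpha)$ defines a $p$-adic analytic map $\Z_p\to(\A^1\times\A^N)(\C_p)$ interpolating the integer iterates, and the condition $\gamma(t)\in C$ becomes the vanishing of finitely many analytic functions on $\Z_p$. Pigeonhole yields a residue class on which the vanishing occurs at infinitely many integers, whereupon Strassman's theorem forces identical vanishing. Hence $f^n(\alpha)\in C$ for all $n\equiv n_0\pmod m$; the $f^m$-orbit of $f^{n_0}(\alpha)$ is an infinite subset of $C$ that is Zariski dense in the irreducible curve $C$, and $f^m(C)\subseteq C$ together with a dimension count forces $f^m(C)=C$.

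The main obstacle lies in constructing the $p$-adic analytic interpolation of the matrix product $B_n(x_0)$ in the infinite-order case. Unlike the scalar $a^n$, it is not the power of any single matrix, so its analyticity in $n$ requires writing each factor $A(g^k(x_0))$ as $A(x^*)$ plus a $p$-adic analytic correction in $k$ and then showing that the resulting product converges analytically on $\Z_p$. The density argument must choose $p$ to simultaneously tame the multiplier of $g$ at $x^*$, the spectrum of $A(x^*)$, and the $p$-adic geometry of the $g$-orbit of $x_0$; this triple balancing is the arithmetic heart of the proof. The linearity of $f$ in $y$ is essential throughout, as it confines all analytic complications to the matrix product $B_n(x_0)$ and prevents any nonlinear dependence on $y_0$.
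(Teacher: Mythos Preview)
Your finite-order case is fine and matches the paper's reduction to a linear self-map on a fixed fiber. The infinite-order case, however, has a genuine gap: the $p$-adic analytic interpolation of $B_n(x_0)$ that you posit does not exist in general. The paper says exactly this in Remark~\ref{rem:no extension}, observing that the $p$-adic arc method---which \emph{does} prove Theorem~\ref{thmdmlsenonlinear} when $\deg g\ge 2$---breaks down for automorphisms $g$. Concretely, take $g(x)=x+1$, $N=1$, $A(x)=(x)$, $x_0=1$: then $B_n(1)=n!$, and for \emph{every} prime $p$ the map $n\mapsto n!$ admits no continuous extension to $\Z_p$, since along $n=p^k\to 0$ in $\Z_p$ one has $v_p\big((p^k)!\big)=(p^k-1)/(p-1)\to\infty$ while the value at $n=0$ is $1$. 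Your conditions (ii)--(iii) cannot be met here: when $a=1$ the only fixed point $x^*=\infty$ is where the polynomial matrix $A$ has a pole, so there is no $A(x^*)$ to perturb around; when $a\ne 1$ with $|a|_p=1$, the orbit $a^nx_0$ lives on the sphere $|x|_p=|x_0|_p$ and never approaches $x^*=0$, so $A(0)$ is again the wrong center (and if $\det A(0)=0$, which is the main case, its eigenvalue $0$ never lies in $1+p^s\OO_{\C_p}$). What your step (iii) would really require is a prime $p$ for which the mod-$p$ orbit of $x_0$ under $g$ avoids the zeros of $\det A$---precisely condition (v) of Proposition~\ref{lemexiredu}. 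For $\deg g\ge 2$ the paper secures that via an auxiliary non-exceptional periodic point and \cite{BGKT-MA}; a degree-one $g$ has at most one finite periodic point, which may lie in $S$, and your sketch supplies no substitute.

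The paper's actual proof is entirely different: induction on $N$, with Siegel's theorem and Quillen--Suslin as the engines. If $\det A$ is constant, $f$ is either an automorphism (handled by \cite{Bell2010}) or its image is a proper sub-bundle (trivialized by Quillen--Suslin, reducing $N$). If $\det A$ is nonconstant, one picks $s$ in its zero set $S$ off the fixed locus of $g$; the images $F_n:=f^n(\pi^{-1}(s))$ are proper linear subspaces of the fibers over $g^n(s)$. Since $C$ and all its strict transforms $f^{-i}(C)$ contain infinitely many $R$-integral orbit points, Siegel bounds each to at most two branches at infinity; if $C\cap\pi^{-1}(g^{n_i}(s))\not\subset F_{n_i}$ held for three values $n_i$, the transform $C^{-n_3}$ would acquire at least three branches, a contradiction. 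The resulting containments $C\cap\pi^{-1}(g^n(s))\subset F_n$ for $n\gg 0$ assemble into a proper $f$-invariant sub-bundle $U$ containing $C$ and the orbit; Quillen--Suslin trivializes $U$ and the induction hypothesis finishes.
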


When the action on the base is non-linear, we get a stronger result (proven in Section~\ref{section proofs 2}).
\begin{thm}\label{thmdmlsenonlinear}
Let $f:\P^1_k\times \A^N_k\dashrightarrow \P^1_k\times \A^N_k$ be a rational self-map of the form $(x,y)\mapsto (g(x),A(x)(y))$ where $g$ is an endomorphism of $\P^1_k$ of degree strictly greater then one, while  $A(x)$ is a matrix in $M_{N\times N}(k(x))$. Let $Y$ be an irreducible subvariety in $\P^1_k\times \A^N_k$ of positive dimension and let $\alpha$ be a point in $\P^1_k\times \A^N_k$. If $f^n(\alpha)\not\in I(f)$ for all $n\geq 0$ and  $\OO_f(\alpha)\cap Y$ is Zariski dense in $Y$, then $Y$ is periodic under $f$.
\end{thm}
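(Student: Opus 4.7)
The plan is to analyze the theorem via the first projection $\pi : \P^1 \times \A^N \to \P^1$, which intertwines $f$ with $g$ in the sense that $\pi \circ f = g \circ \pi$ on the open locus where $f$ is defined. Consequently, $\pi(\OO_f(\alpha)) \subseteq \OO_g(\pi(\alpha))$, and the irreducible set $\overline{\pi(Y)} \subseteq \P^1$ is either a single point $\{x_0\}$ or all of $\P^1$. I would treat these two cases separately.

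In the case $\overline{\pi(Y)} = \{x_0\}$, we have $Y \subseteq \{x_0\} \times \A^N$. Since $Y$ has positive dimension and $\OO_f(\alpha) \cap Y$ is Zariski dense in $Y$, infinitely many indices $n$ satisfy $g^n(\pi(\alpha)) = x_0$. Choosing two such indices $n_1 < n_2$ gives $g^{n_2-n_1}(x_0) = x_0$, so $x_0$ is $g$-periodic of some period $p$, and $\pi(\alpha)$ is $g$-preperiodic. Replacing $\alpha$ by a sufficiently large iterate and $f$ by $f^p$, we reduce to the linear action $y \mapsto By$ on $\A^N$, where $B := A(g^{p-1}(x_0)) \cdots A(x_0)$. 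The question becomes whether a subvariety of $\A^N$ meeting the $B$-orbit of a point in a Zariski dense set must be periodic, which is the classical Dynamical Mordell--Lang statement for a single matrix; it follows from Skolem--Mahler--Lech combined with Laurent's theorem for subvarieties of algebraic tori and yields the required periodicity.

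Now assume $\overline{\pi(Y)} = \P^1$, so $Y$ dominates the base. If $\pi(\alpha)$ were $g$-preperiodic, then $\OO_f(\alpha)$ would lie in finitely many fibers of $\pi$ and the Zariski closure of $\OO_f(\alpha) \cap Y$ would be contained in a proper closed subvariety of $Y$, contradicting the density hypothesis. Hence $\pi(\alpha)$ has infinite $g$-orbit. For this case I would deploy the Bell--Ghioca--Tucker $\mathfrak p$-adic interpolation method, adapted to the skew-linear setting along the lines of \cite{skew linear paper 1}. Because $\deg g \geq 2$, one can find a prime $\mathfrak p$ of good reduction together with a $g^M$-periodic point $x^* \in \P^1$ that is $\mathfrak p$-adically attracting, and such that the product matrix $A_M(x^*) := A(g^{M-1}(x^*)) \cdots A(x^*)$ has eigenvalues in a sufficiently small $\mathfrak p$-adic disc. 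Some iterate $f^{n_0}(\alpha)$ then lies in a $\mathfrak p$-adic polydisc on which $f^M$ admits an analytic linearization, and the sequence $n \mapsto f^{nM + n_0}(\alpha)$ extends to a $\mathfrak p$-adic analytic map $\gamma : \Z_{\mathfrak p} \to \P^1 \times \A^N$. The condition $\gamma(n) \in Y$ is a system of $\mathfrak p$-adic analytic equations in $n$, so by a standard $\mathfrak p$-adic Weierstrass argument its solution set is either finite or all of $\Z_{\mathfrak p}$. Density of $\OO_f(\alpha) \cap Y$ in $Y$ forces the second alternative for some residue class modulo $M$; the Zariski closure of the corresponding subsequence of orbit points is an $f^M$-invariant subvariety of $Y$. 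Combining such invariant subvarieties across residue classes and invoking irreducibility of $Y$ gives $f^M(Y) = Y$, so $Y$ is $f$-periodic.

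The main obstacle is the $\mathfrak p$-adic step in the dominating case: selecting a prime $\mathfrak p$ and a $g$-periodic cycle $x^*$ so that $g$ near $x^*$ and the cocycle matrix $A_M(x^*)$ are simultaneously analytically linearizable in a common $\mathfrak p$-adic polydisc, together with ensuring that some iterate of $\alpha$ enters the corresponding basin of attraction. A secondary subtlety lies in the final combinatorial step: passing from $f^M$-invariance of the Zariski closure of a single congruence subsequence (which a priori could be a proper subvariety of $Y$) to $f^M$-invariance of $Y$ itself, which requires combining the conclusions across all contributing residue classes via the density hypothesis and irreducibility of $Y$.
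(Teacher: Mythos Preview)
Your overall architecture---split off the preperiodic-fiber case, then run a $p$-adic arc lemma---matches the paper's. But the mechanism you propose for the $p$-adic step is not right, and this is where all the content lies.

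You propose to find a $g^M$-periodic point $x^*$ that is ``$\mathfrak p$-adically attracting'' and then have some iterate of $\alpha$ fall into its basin. This does not work: for a map $g$ of degree $\ge 2$ with good reduction at $\mathfrak p$, residue discs are permuted by $g$, so $\pi(\alpha)$ never leaves the cycle of residue discs determined by its own reduction. There is no reason it should approach a pre-chosen $x^*$. The correct move (and what the paper does) is the opposite: since $\P^1(\F_v)$ is finite, the reduction $\overline{\pi(\alpha)}$ is automatically preperiodic; after replacing $\alpha$ by an iterate and $f$ by a power, $\overline{\alpha}$ is a fixed point of $f_v$, and the residue polydisc $V$ around $\alpha$ is $f$-invariant. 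Also, your requirement that $A_M(x^*)$ have ``eigenvalues in a sufficiently small $\mathfrak p$-adic disc'' is incompatible with what the arc lemma needs: small eigenvalues force $\det A_M\equiv 0\pmod{\mathfrak p}$, so $f$ is not \'etale along that fiber and $f|_V$ is not an analytic automorphism; Poonen's interpolation then does not apply. What you actually need is that $A$ is defined and invertible modulo $v$ along the cycle of $\overline{\pi(\alpha)}$, i.e.\ that this cycle avoids the bad set $S_v$ where $f$ fails to be \'etale.

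Arranging a place $v$ with this property is the genuine difficulty, and your sketch does not address it. The paper handles it in Proposition~\ref{lemexiredu}: one first specializes the transcendental parameters so that $\pi(\alpha)$ remains non-preperiodic over a number field (a height argument using Baker and Call--Silverman), then invokes \cite[Lemma~4.1]{BGKT-MA} to find a prime at which the reduction of $\pi(\alpha)$ eventually lands on a periodic cycle disjoint from the reduction of $S$, and finally uses a Baire-category argument to embed $K$ into the completion $L_v$. None of these steps is routine, and together they replace your ``attracting point'' heuristic. Finally, you do not treat the case $\det A(x)\equiv 0$ in $k(x)$ (so $f$ is not dominant); here the $p$-adic argument cannot start because $A$ is everywhere singular. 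The paper reduces this to the dominant case by passing to the invariant subbundle $Z=\bigcap_n f^n(\P^1\times\A^N)$ and trivializing it birationally.
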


\medskip

By considering the isomorphism $\P^N_k=\A^{N+1}_k/\G_{m,k}$,  
Theorem \ref{thmdmlsenonlinear} implies the following result immediately.  
\begin{cor}\label{cordmlprolin}
Let $f:\P^1_k\times \P_k^N\dashrightarrow \P^1_k\times \P^N_k$  be a dominant rational self-map of the form $(x,y)\mapsto (g(x),A(x)(y))$, where $g$ is an endomorphism of $\P^1_k$ of degree strictly greater then one, while  $A(x)$ is an element in $\PGL_{N+1}(k(x))$. Let $Y$ be an irreducible subvariety in $\P^1_k\times \P^N_k$ of positive dimension and let $\alpha$ be a point in $\P^1_k\times \P^N_k$. If $f^n(\alpha)\not\in I(f)$ for all $n\geq 0$ and  $\OO_f(\alpha)\cap Y$ is Zariski dense in $Y$, then $Y$ is periodic under $f$.
\end{cor}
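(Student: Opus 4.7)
The plan is to lift the setup from $\P^1_k \times \P^N_k$ to $\P^1_k \times \A^{N+1}_k$ via the quotient presentation $\P^N_k = (\A^{N+1}_k\setminus\{0\})/\G_{m,k}$, and then to invoke Theorem \ref{thmdmlsenonlinear}. Denote by $\pi : \P^1_k \times (\A^{N+1}_k\setminus\{0\}) \to \P^1_k \times \P^N_k$ the quotient map. Choose a lift $\tilde A(x) \in \GL_{N+1}(k(x))$ of $A(x) \in \PGL_{N+1}(k(x))$; since $\G_{m,k}$-torsors over any affine open of $\P^1_k$ are trivial, we may normalize $\tilde A$ so that $\tilde A$ is regular and invertible precisely at those points of $\P^1_k$ above which $f$ is defined. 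Define the $\G_{m,k}$-equivariant rational self-map
\[
\tilde f : \P^1_k \times \A^{N+1}_k \dashrightarrow \P^1_k \times \A^{N+1}_k,\qquad (x,v)\mapsto (g(x), \tilde A(x)v),
\]
so that $\pi\circ\tilde f=f\circ\pi$ wherever both sides are defined. Pick any lift $\tilde\alpha\in\P^1_k\times(\A^{N+1}_k\setminus\{0\})$ of $\alpha$. The hypothesis $f^n(\alpha)\notin I(f)$ for all $n\geq 0$ then guarantees that $\tilde f^n(\tilde\alpha)$ is defined and lies off the zero section for every $n\geq 0$.

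Let $\tilde Y\subseteq\P^1_k\times\A^{N+1}_k$ be the affine cone over $Y$, namely, the Zariski closure of $\pi^{-1}(Y)$. Then $\tilde Y$ is irreducible with $\dim\tilde Y=\dim Y+1>0$, and is $\G_{m,k}$-stable under scalar multiplication on $\A^{N+1}$. By this $\G_m$-stability, $f^n(\alpha)\in Y$ if and only if $\tilde f^n(\tilde\alpha)\in\tilde Y$; consequently the set $T:=\OO_{\tilde f}(\tilde\alpha)\cap\tilde Y$ projects via $\pi$ onto $\OO_f(\alpha)\cap Y$, which by hypothesis is Zariski dense in $Y$. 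However, $T$ itself need not be Zariski dense in $\tilde Y$, since the single orbit upstairs may fail to fill out the extra $\G_m$-direction of the cone; this is the only real obstacle to applying Theorem \ref{thmdmlsenonlinear} directly to $\tilde Y$.

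To work around it, let $W:=\overline{T}\subseteq\tilde Y$ and decompose $W=W_1\cup\cdots\cup W_r$ into irreducible components. A standard argument (using that $W=\bigcup_i\overline{T\cap W_i}$ and that each $W_i$ is irreducible) shows that $T\cap W_i$ is Zariski dense in $W_i$ for every $i$. Since $\overline{\pi(W)}\supseteq\overline{\OO_f(\alpha)\cap Y}=Y$ and $Y$ is irreducible, at least one component $W_0$ satisfies $\overline{\pi(W_0)}=Y$; in particular $\dim W_0\geq\dim Y>0$. Applying Theorem \ref{thmdmlsenonlinear} to the skew-linear map $\tilde f$ on $\P^1_k\times\A^{N+1}_k$ and to the positive-dimensional irreducible subvariety $W_0$, whose intersection with $\OO_{\tilde f}(\tilde\alpha)$ is Zariski dense in $W_0$, we conclude that $W_0$ is periodic under $\tilde f$. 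Pushing forward via $\pi$ and using $\pi\circ\tilde f=f\circ\pi$, the subvariety $Y=\overline{\pi(W_0)}$ is periodic under $f$, as required.
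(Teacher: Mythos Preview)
Your argument is correct and follows exactly the route the paper indicates: lift through the quotient $\P^N_k=\A^{N+1}_k/\G_{m,k}$ to a skew-linear self-map $\tilde f$ on $\P^1_k\times\A^{N+1}_k$ and invoke Theorem~\ref{thmdmlsenonlinear}. You have moreover supplied the one nontrivial detail the paper leaves implicit---that the lifted orbit need not be Zariski dense in the full cone $\tilde Y$---by passing to an irreducible component $W_0$ of $\overline{\OO_{\tilde f}(\tilde\alpha)\cap\tilde Y}$ that dominates $Y$ and applying the theorem there.
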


Our Theorems~\ref{thmdmlslao}~and~\ref{thmdmlsenonlinear} are some of the very few known instances (besides the case of \'etale endomorphisms proven in \cite{Bell2010} and the case  of endomorphisms of $\A^2$ proven in \cite{Xiec}) when the Dynamical Mordell-Lang Conjecture is proven for a non-split endomorphism.

Our proofs for Theorems~\ref{thmdmlslao} and \ref{thmdmlsenonlinear} are quite different. While for Theorem~\ref{thmdmlsenonlinear} we are able to show that there exists a suitable prime with respect to which one can use the \emph{$p$-adic arc lemma} (for more details on this construction, see \cite[Chapter~4]{Bell2016}), in order to prove Theorem~\ref{thmdmlslao}, our main tool is Siegel's theorem along with the classical Quillen-Suslin theorem (see \cite{Quillen}). On the other hand, we explain in Remark~\ref{rem:no extension} that our proof for Theorem~\ref{thmdmlsenonlinear} cannot be extended to the case $g$ is an automorphism of the base in order to treat intersections of orbits with subvarieties $Y\subset \A^1\times \A^N$ of dimension larger than $1$.  Finally, we note that if the rational function $g$ from Theorems~\ref{thmdmlslao}~and~\ref{thmdmlsenonlinear} were constant, then the conclusion of the Dynamical Mordell-Lang Conjecture would follow immediately in this special case, as a consequence of \cite{Bell2010}.  

%%%%%%%%%%%%%%%%%%%%%%%%%%%%%%%%%%%%%%%%%%%%%%%%%%%%%%%%%%%%%%%%%%%%%%%%%%%%%

\subsection{Further applications}

%%%%%%%%%%%%%%%%%%%%%%%%%%%%%%%%%%%%%%%%%%%%%%%%%%%%%%%%%%%%%%%%%%%%%%%%%%%%%

Theorem \ref{thmdmlsenonlinear} has the following applications to linear difference equations.

Let $g\in k(x)$ be any nonconstant rational function and let $\ell\in \N$. Then $g$ defines a difference field $(k(x), \sigma)$ where $\sigma$ is the endomomorphism of $k(x)$ defined by $\sigma(h(x))=h(g(x))$ for $h\in k(x)$.
%Let $$E=\sigma^\ell(y)+h_{\ell-1}\sigma^{\ell-1}(y)+\dots+h_{0}y$$
%be a linear difference equation with coefficients $h_{0},\dots,h_{\ell-1}\in k(x)$. 
In \cite{Wib-JEMS}, Wibmer studied the following two problems in the case that $g(x)=x+1.$

\begin{smlprob} 
Let $\{a_n\}_{n\geq 0}$ be a 
recurrence sequence in $k$ satisfying the following recurrence equation 
$$a_{n+\ell}=\sum_{i=0}^{\ell-1} h_i(g^{n}(\alpha))a_{n+i}$$ 
where the $h_i$'s are rational functions in $k(x)$, while $\alpha\in\P_k^1$. 
Is it true that the set $\{n\geq 0\colon a_n=0\}$ is a finite union of arithmetic progressions?  
\end{smlprob}

\begin{pvprob}
Does there exist a Picard-Vessiot extension of $k(x)$ for the linear difference equation $\sigma^\ell(y)-h_{\ell-1}\sigma^{\ell-1}(y)-\dots-h_{0}y=0$ inside the
ring of $k$-valued sequences?
\end{pvprob}

For more background on difference equations and the aforementioned two problems, we refer the reader to \cite{Wib-JEMS} and \cite{Put-Singer}.

In \cite{Wib-JEMS}, Wibmer shows that when $g(x)=x+1$, a certain special case of the Dynamical Mordell-Lang Conjecture would imply an affirmative solution to problems Skolem-Mahler-Lech and Picard-Vessiot and he also solved Problem Picard-Vessiot affirmatively under the restriction $h_{1},\dots,h_{\ell-1}\in k[x]$ and $h_0\in k\setminus \{0\}.$

As a direct application of Theorem~\ref{thmdmlsenonlinear} 
%(see also \cite[Theorem~3.1]{Wib-JEMS}),
 we solve Problem Skolem-Mahler-Lech affirmatively when $\deg g\geq 2$.
\begin{cor}\label{corsmlnonline}
Let $g\in k(x)$ be a rational function of degree at least $2$, let $\alpha\in \P_k^1$ and let $\ell\in \N$. Let $\{a_n\}_{n\geq 0}$ be a recurrence sequence in $k$ satisfying the following recurrence equation 
\begin{equation}
\label{Rubel equation}
a_{n+\ell}=\sum_{i=0}^{\ell-1} h_i(g^{n}(\alpha))a_{n+i} 
\end{equation}
where $h_i$ are rational functions in $k(x).$
Then the set $\{n\geq 0\colon a_n=0\}$ is a finite union of arithmetic progressions.
\end{cor}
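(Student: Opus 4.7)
The strategy is to realize the recurrence \eqref{Rubel equation} as the orbit of a skew-linear self-map and then invoke Theorem~\ref{thmdmlsenonlinear}. Set $\beta:=(a_0,a_1,\dots,a_{\ell-1})\in\A^{\ell}_k$, and let $A(x)\in M_{\ell\times\ell}(k(x))$ be the companion matrix
\[
A(x)=\begin{pmatrix}
0 & 1 & 0 & \cdots & 0\\
0 & 0 & 1 & \cdots & 0\\
\vdots & & & \ddots & \vdots\\
0 & 0 & 0 & \cdots & 1\\
h_0(x) & h_1(x) & h_2(x) & \cdots & h_{\ell-1}(x)
\end{pmatrix}.
\]
Define $f\colon\P^1_k\times\A^{\ell}_k\dra\P^1_k\times\A^{\ell}_k$ by $f(x,y):=(g(x),A(x)y)$. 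Induction on $n$ using \eqref{Rubel equation} gives
\[
f^n(\alpha,\beta)=\bigl(g^n(\alpha),(a_n,a_{n+1},\dots,a_{n+\ell-1})\bigr)\quad\text{for every }n\ge 0.
\]
Hence, with $Y:=\P^1_k\times\{y_1=0\}\subset\P^1_k\times\A^{\ell}_k$, the target set becomes $\{n\ge 0\colon a_n=0\}=\{n\ge 0\colon f^n(\alpha,\beta)\in Y\}$.

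Before invoking Theorem~\ref{thmdmlsenonlinear} I check the hypothesis $f^n(\alpha,\beta)\notin I(f)$. The indeterminacy locus of $f$ is $P\times\A^{\ell}_k$, where $P\subset\P^1_k$ is the finite pole set of $h_0,\dots,h_{\ell-1}$; the very statement of \eqref{Rubel equation} presupposes that each $h_i(g^n(\alpha))$ is defined, i.e.\ $g^n(\alpha)\notin P$ for every $n\ge 0$, so the hypothesis is automatic.

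Next, set $Y_0:=\overline{\OO_f(\alpha,\beta)\cap Y}$ and decompose $Y_0=W_1\cup\cdots\cup W_r$ into irreducible components. A standard redundancy argument shows that $\OO_f(\alpha,\beta)\cap W_j$ is Zariski dense in $W_j$ for each $j$. For each $W_j$ of positive dimension, Theorem~\ref{thmdmlsenonlinear} then yields $f^{n_j}(W_j)=W_j$ for some $n_j\in\N$; letting $d$ be the least common multiple of the $n_j$ (or $d:=1$ if no such $j$ exists), one has $f^d(W_j)\subseteq W_j$ for every positive-dimensional $W_j$.

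To conclude, write $S:=\{n\ge 0\colon a_n=0\}=\bigcup_{j=1}^{r}S_j$ with $S_j:=\{n\ge 0\colon f^n(\alpha,\beta)\in W_j\}$. For positive-dimensional $W_j$, the containment $f^d(W_j)\subseteq W_j$ forces $S_j$ to be stable under $n\mapsto n+d$; hence $S_j$ is determined by its minimum in each residue class modulo $d$, and is thus a union of at most $d$ arithmetic progressions. For zero-dimensional $W_j$, the set $S_j$ is itself a finite union of arithmetic progressions by direct inspection of the (possibly $f$-preperiodic) orbit. Hence $S$ is a finite union of arithmetic progressions, as required. The only nontrivial ingredient is Theorem~\ref{thmdmlsenonlinear}; the final step is the standard passage between the ``periodic closure'' and ``arithmetic progressions'' formulations of the Dynamical Mordell-Lang Conjecture.
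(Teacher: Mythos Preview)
Your proof is correct and follows essentially the same approach as the paper: both introduce the companion matrix $A(x)$, the skew-linear map $f(x,y)=(g(x),A(x)y)$, the starting point $(\alpha,(a_0,\dots,a_{\ell-1}))$, and the subvariety $Y=\P^1\times\{y_1=0\}$, then reduce to Theorem~\ref{thmdmlsenonlinear}. The only difference is that you spell out explicitly the standard passage from the ``periodic closure'' to the ``arithmetic progressions'' formulation (decomposing $\overline{\OO_f(\alpha,\beta)\cap Y}$ into irreducible components and treating each one), whereas the paper simply invokes this equivalence by reference to \cite[Chapter~3]{Bell2016} and \cite{BGT-Func}.
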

Indeed, Corollary~\ref{corsmlnonline} follows as a consequence of our Theorem~\ref{thmdmlsenonlinear} in a similar way as \cite[Theorem~1.7]{BGT-Func} followed from \cite[Corollary~1.5]{BGT-Func}. More precisely, we consider the rational map $\Phi:\P^1\times \A^\ell\dashrightarrow \P^1\times\A^\ell$ defined by $\Phi(x,y)=(g(x),A(x)y)$, where the linear transformation $A\in M_{\ell,\ell}(k(x))$ is given by
\[A(x):=\left(\begin{array}{cccccc}
0 & 1 & 0 & 0 & \cdots & 0\\
0 & 0 & 1 & 0 & \cdots & 0 \\
\cdots & \cdots & \cdots & \cdots & \cdots & \cdots \\
0 & 0 & 0 & \cdots & 0 & 1 \\ 
h_0(x) & h_1(x) & \cdots & \cdots & \cdots & h_{\ell-1}(x)
\end{array}\right) 
\]
Then letting $v_0:=(a_0,a_1,\cdots,a_{\ell-1})$, we have that  
$$\Phi^n(\alpha,v_0) = \left(g^n(\alpha),a_n,a_{n+1},\cdots,a_{n+\ell-1}\right).$$ 
So, letting $Y_1:=\{0\}\times \A^{\ell-1}\subset \A^\ell$ and then letting $Y:=\P^1\times Y_1$, allows us to apply Theorem~\ref{thmdmlsenonlinear} to the subvariety $Y\subset \P^1\times \A^\ell$ under the action of $\Phi$ in order to  derive the desired conclusion in Corollary~\ref{corsmlnonline}.

We also observe that our Corollary~\ref{corsmlnonline} yields a positive answer to a variant of Rubel's \cite[Question~16]{Rubel} (see also \cite[Theorem~1.7,~p.~3-4]{BGT-Func}). Indeed, \cite[Question~16]{Rubel} asks to characterize the set of all $n\in\N_0$ such that $a_n=0$, where 
$$f(z):=\sum_{n=0}^\infty a_nz^n$$
is the solution of a linear differential equation with polynomial coefficients. For the question raised by Rubel \cite{Rubel}, the sequence $\{a_n\}$ satisfies a recurrence sequence of the form \eqref{Rubel equation} where $g(x)=x+1$. 

\medskip

The role of Picard-Vessiot extensions in the Galois theory of linear difference equations is similar to the role of splitting fields in the usual Galois theory of polynomials. Instead of requiring that a polynomial of degree $\ell$ has $\ell$ distinct roots in the splitting field one requires that the $k$-space of all solutions to  $\sigma^\ell(y)-h_{\ell-1}\sigma^{\ell-1}(y)-\dots-h_{0}y=0$ in the Picard-Vessiot extension has dimension $\ell$. The recurrence formula (\ref{Rubel equation}) yields $\ell$ $k$-linearly independent solutions in the ring of $k$-valued sequences. It is therefore natural to ask if there exists a Picard-Vessiot extension inside the ring of sequences. In the appendix (Section \ref{sec:appendix}) we will show how Corollary \ref{corsmlnonline} can be used to solve problem Picard-Vessiot affirmative if $\deg g\geq 2$.

%
%Using a similar argument as in \cite[Theorem~3.1]{Wib-JEMS}, our Theorem~\ref{thmdmlsenonlinear}, solves Problem Picard-Vessiot  affirmatively when $\deg g\geq 2.$

\begin{thm}\label{thmpvnonline} If $\deg g\geq 2$ and $h_{\ell-1},\ldots,h_0\in k(x)$, then there exists a Picard-Vessiot extension of $k(x)$ for $\sigma^\ell(y)-h_{\ell-1}\sigma^{\ell-1}(y)-\dots-h_{0}y=0$ inside the ring of $k$-valued sequences.
\end{thm}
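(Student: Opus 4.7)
The plan is to realize a Picard-Vessiot ring for $\sigma^\ell(y) - h_{\ell-1}\sigma^{\ell-1}(y) - \cdots - h_0 y = 0$ as a difference subring of the ring $\mathcal{S} := k^{\N_0}$ of $k$-valued sequences (with $\sigma$ the shift), using Corollary~\ref{corsmlnonline} to force simplicity. We may assume $h_0 \neq 0$; otherwise the equation has lower order.

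First I would embed the base. Choose $\alpha \in k$ that is not preperiodic for $g$ and whose forward orbit avoids the poles of $h_0, \dots, h_{\ell-1}$; such $\alpha$ exists because the set of bad choices is countable while $k$ may be taken uncountable in characteristic $0$, or else one may enlarge $k$. The evaluation $h \mapsto (h(g^n(\alpha)))_n$ then gives a difference ring embedding of $k(x)$ into $\mathcal{S}$ (for any other $h \in k(x)$ with poles on the orbit one modifies only finitely many entries, since each pole is visited at most finitely often along a non-preperiodic orbit). Let $A(x)$ be the companion matrix displayed just after Corollary~\ref{corsmlnonline}, and consider the skew-linear self-map $\Phi(x, Y) = (g(x), A(x) Y)$ of $\P^1 \times \A^{\ell^2}$, where $A(x)$ acts on an $\ell \times \ell$ matrix $Y$ by column-wise left multiplication; since $\deg g \geq 2$, $\Phi$ satisfies the hypotheses of Theorem~\ref{thmdmlsenonlinear} with $N = \ell^2$.

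Next I would construct the candidate ring. Pick $Y_0 \in \Gl_\ell(k)$ and set $Y_{n+1} := A(g^n(\alpha)) Y_n$. The identity $\det Y_{n+1} = \pm h_0(g^n(\alpha)) \det Y_n$ combined with the choice of $\alpha$ guarantees $\det Y_n \neq 0$ for every $n$. Define $R \subseteq \mathcal{S}$ to be the $k(x)$-subalgebra generated by the entries of $Y_0$ together with $\det(Y_0)^{-1}$; since $\sigma^n(Y_0) = Y_n$, the ring $R$ is $\sigma$-stable, and the columns of $Y_0$ give $\ell$ $k$-linearly independent solutions of the equation in $\mathcal{S}$. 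To verify that $R$ is a Picard-Vessiot ring, it remains to show that the $\sigma$-invariants of $R$ equal $k$ and that $R$ is $\sigma$-simple. The first is immediate: any $\sigma$-invariant sequence in $\mathcal{S}$ is constant, so $R^\sigma = R \cap k = k$.

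For simplicity, let $I \subseteq R$ be a non-zero $\sigma$-ideal and pick $0 \neq f \in I$. Writing $f = P(x, Y_0, \det(Y_0)^{-1})$ for some polynomial $P$, the sequence $f_n = P(g^n(\alpha), Y_n, \det(Y_n)^{-1})$ is the evaluation of a regular function $\widetilde P$ on the $\Phi$-orbit of $(\alpha, Y_0) \in \A^1 \times \Gl_\ell(k)$. By Theorem~\ref{thmdmlsenonlinear} (applied exactly as in the derivation of Corollary~\ref{corsmlnonline}), the zero-set $Z(f) := \{n : f_n = 0\}$ is a finite union of arithmetic progressions, and is not cofinite since $f \neq 0$ in $\mathcal{S}$. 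The main obstacle is the last step, namely producing a unit of $\mathcal{S}$ inside $I$ from this AP-structure. Exploiting the $\sigma$-invariance of $I$ and its closure under $R$-multiplication, I would consider $R$-linear combinations $\sum_i c_i \sigma^i(f)$; since finitely many translates $Z(f) - i$ cannot jointly cover $\N_0$ unless $Z(f)$ is cofinite, one can choose the $c_i \in R$ so that the resulting element of $I$ is nowhere zero, hence a unit of $\mathcal{S}$. This forces $1 \in I$ and thus $I = R$, contradicting $I \subsetneq R$; hence $R$ is $\sigma$-simple and so is a Picard-Vessiot extension of $k(x)$ inside the ring of $k$-valued sequences.
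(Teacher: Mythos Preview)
Your setup is essentially the paper's: embed $k(x)$ in the sequence ring via evaluation along a non-preperiodic orbit, build a fundamental matrix $Y$ from the recursion $Y_{n+1}=A(g^n(\alpha))Y_n$, set $R=k(x)[Y_{ij},1/\det Y]$, and invoke Corollary~\ref{corsmlnonline} so that every element of $R$ has zero-set a finite union of arithmetic progressions. The divergence---and the gap---is in the last step. You try to prove that $R$ itself is $\sigma$-simple: given a nonzero $\sigma$-ideal $I\ni f$, you combine shifts $\sigma^i(f)$ to produce an element of $I$ that is nowhere zero, hence a unit in $\mathcal S$, and then conclude $1\in I$. That implication is invalid: $I$ is an ideal of $R$, not of $\mathcal S$, and the inverse of your nowhere-zero element need not lie in $R$. (For instance, when $\ell=1$ the ring $R$ is a Laurent polynomial ring over $k(x)$; its only units are monomials, while generic elements are nowhere zero as sequences.) What your argument actually yields is only that every nonzero $\sigma$-ideal of $R$ contains a non-zero-divisor, which is strictly weaker than $\sigma$-simplicity. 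Also, the stated reason ``translates $Z(f)-i$ cannot jointly cover $\N_0$'' is not the relevant condition; you need the \emph{intersection} of suitably many translates to be empty, and then a further argument (over an uncountable $k$) to choose constants $c_i$ avoiding countably many hyperplanes simultaneously.

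The paper does not attempt to show $R$ is $\sigma$-simple; it instead proves that the total ring of fractions $L=k(x)(Y_{ij})$ is a difference pseudo field (which is the Picard-Vessiot extension in the sense used here). The key step replaces your additive combination by a \emph{multiplicative} one: if $a\in R$ is a non-zero-divisor but not a unit in $\seq_k$, then $Z(a)$ is infinite, so by Corollary~\ref{corsmlnonline} it contains some $c+d\N_0$; then $a\,\sigma(a)\cdots\sigma^{d}(a)=0$ in $\seq_k$ (at every $n\gg 0$ one factor has index in $c+d\N_0$), and since all factors lie in $R$ this contradicts $a$ being a non-zero-divisor. Hence every non-zero-divisor of $R$ is a unit in $\seq_k$, so $L\subseteq\seq_k$. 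Because $R$ is reduced and Noetherian, $L$ is a finite product of fields, and since $(\seq_k)^\sigma=k$ has no nontrivial idempotents, $\sigma$ permutes the primitive idempotents of $L$ in a single cycle; thus $L$ is a difference pseudo field.
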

Theorem \ref{thmpvnonline} follows from Theorem \ref{theo:existsPV} by choosing $A$ as in the proof of Corollary~\ref{corsmlnonline}.

%\medskip
%
%{\bf Acknowledgments.} We are grateful to our colleagues Philippe Gille and Matthieu Romagny for helpful conversations while preparing our paper.
%
%%%%%%%%%%%%%%%%%%%%%%%%%%%%%%%%%%%%%%%%%%%%%%%%%%%%%%%%%%%%%%%%%%%%%%%%%%%%%%%
%%%%%%%%%%%%%%%%%%%%%%%%%%%%%%%%%%%%%%%%%%%%%%%%%%%%%%%%%%%%%%%%%%%%%%%%%%%%%%%

\section{Proof of Theorem \ref{thmdmlslao}}
\label{section proofs 1}

%%%%%%%%%%%%%%%%%%%%%%%%%%%%%%%%%%%%%%%%%%%%%%%%%%%%%%%%%%%%%%%%%%%%%%%%%%%%%%%
%%%%%%%%%%%%%%%%%%%%%%%%%%%%%%%%%%%%%%%%%%%%%%%%%%%%%%%%%%%%%%%%%%%%%%%%%%%%%%%

We work under the hypothesis of Theorem~\ref{thmdmlslao}. We prove our result by induction on $N$, noting that the case $N=0$ is trivial. 

We also note that in our proof we may replace $f$ by an iterate of itself. In addition, we may always replace $g$ by a conjugate of itself through a linear automorphism, and therefore, replace $f$ by a conjugate through an automorphism of $\A^1\times\A^N$; for more details regarding the various reductions in the Dynamical Mordell-Lang Conjecture, see also \cite[Chapter~3]{Bell2016}.

We proceed by first dealing with the case when $\det A(x)$ is constant. 

\begin{lemma}
\label{determinant constant automorphism}
Theorem~\ref{thmdmlslao} holds if $\det A(x)$ is constant. 
\end{lemma}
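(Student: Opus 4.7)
My plan is to split into two sub-cases, according to whether $\det A(x)$ is a nonzero constant in $k^*$ or is identically zero.

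Suppose first that $\det A(x) = c \in k^*$. Then by Cramer's rule, $A(x)^{-1} = c^{-1}\operatorname{adj}(A(x)) \in M_{N\times N}(k[x])$, so $A(x) \in \GL_N(k[x])$. Combined with the hypothesis that $g$ is an automorphism of $\A^1$, this shows $f$ is an automorphism---and in particular an étale endomorphism---of the quasi-projective variety $\A^1 \times \A^N$. The main theorem of Bell, Ghioca and Tucker \cite{Bell2010}, which establishes the Dynamical Mordell--Lang Conjecture for all étale endomorphisms of quasi-projective varieties, then yields that $\{n \in \N_0 : f^n(\alpha) \in C\}$ is a finite union of arithmetic progressions. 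Since this set is infinite, at least one such progression has positive common difference $b$, and the Zariski closure of the corresponding infinite sub-orbit is an infinite irreducible subset of $C$, hence equals $C$. Applying $f^b$ yields $f^b(C) \subseteq C$, and equality holds because $f^b$ is an automorphism, so $C$ is periodic.

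Now suppose $\det A(x) \equiv 0$, and let $r := \operatorname{rank}_{k(x)} A < N$. The Zariski closure $Y$ of $f(\A^1 \times \A^N)$ is a proper closed subvariety of $\A^1 \times \A^N$; all iterates $f^n(\alpha)$ with $n \geq 1$ lie in $Y$, so since $C \cap \OO_f(\alpha)$ is infinite and $C$ is irreducible, $C \subseteq Y$. To reduce to a smaller skew-linear system I would invoke the Quillen--Suslin theorem: the saturation $V$ in $k[y]^N$ of the $k[y]$-module generated by the columns of $A(g^{-1}(y))$ is a free module of rank $r$, and $k[y]^N / V$ is torsion-free, hence free of rank $N - r$; so one can choose $M \in \GL_N(k[y])$ whose first $r$ columns form a basis of $V$. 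A direct computation then shows that conjugating $f$ by the automorphism $\Phi(x, y) := (x, M(x)y)$ produces a skew-linear self-map with new matrix $\tilde A(x) := M(g(x))^{-1} A(x) M(x) \in M_{N\times N}(k[x])$ whose last $N - r$ rows vanish identically. The conjugated map therefore carries $\A^1 \times \A^N$ into $\A^1 \times \A^r \times \{0\}^{N-r}$ and restricts to a skew-linear self-map of $\A^1 \times \A^r$ of the same form as in Theorem~\ref{thmdmlslao}. Applied to this restriction together with the first iterate of $\Phi^{-1}(\alpha)$ and the conjugated curve $\Phi^{-1}(C) \subseteq \Phi^{-1}(Y) \subseteq \A^1 \times \A^r \times \{0\}^{N-r}$, the induction hypothesis on $N$ shows that $\Phi^{-1}(C)$ is periodic, and hence so is $C$.

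The main obstacle is the second case: one must arrange the change of coordinates so that the rank-$r$ subspace into which the image of $A$ is to be pushed is compatible with the $g$-shift on the base. This is precisely why one saturates the column module of $A(g^{-1}(\cdot))$ rather than $A(\cdot)$---the identity $\{e_i(g(x))\}_{i\leq r} \supseteq \operatorname{sat.\,col}A(x)$ is exactly what forces the last $N-r$ rows of $\tilde A$ to vanish. Case (a) is by contrast an essentially immediate consequence of \cite{Bell2010}.
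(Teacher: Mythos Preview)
Your proof is correct and follows the same strategy as the paper: case~(a) is dispatched via \cite{Bell2010}, and case~(b) is handled by Quillen--Suslin plus induction on $N$. The only difference is presentational: the paper argues that the Zariski closure $Y$ of the image of $f$ is an $f$-invariant rank-$\ell$ subbundle of $\A^1\times\A^N$, trivializes it via Quillen--Suslin, and applies induction to $f|_Y$; you instead carry out the trivialization explicitly by saturating the column module of $A(g^{-1}(\cdot))$ and conjugating by the resulting $M\in\GL_N(k[x])$, which is exactly the matrix computation underlying the paper's bundle statement (and your observation about the $g$-shift is precisely why $Y$ is a subbundle over the target rather than the source). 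Both arguments are equivalent; yours is more hands-on, the paper's more geometric.
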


\begin{proof}[Proof of Lemma~\ref{determinant constant automorphism}.]
If $\det A(x)$ is a nonzero constant, then $f$ is an automorphism and so, the result follows from \cite{Bell2010}. 

So, assume now that $\det A(x)=0$. We let $\ell$ be the rank of $A(x)$ as a matrix of elements in $k(x).$ We have $\ell\leq N-1.$ Let $Y$ be the Zariski closure of the image of $f$. Then $Y$ is a subbundle of $\A^1\times\A_k^N$ of rank $\ell$ and it is invariant under $f$.  Since $f(\alpha)\in Y$, after replacing $\alpha$ by $f(\alpha)$, we may suppose that $\alpha$ is contained in $Y$. Then $f^n(\alpha)\in Y$ for all $n\geq 0.$
If $C$ is not contained in $Y$, then $\OO_f(\alpha)\cap C \subseteq C\cap Y$ is finite.  Otherwise, we have $C\subseteq Y$.
Quillen-Suslin's theorem (see \cite{Quillen}) yields that all vector bundles on $\A^1_k$ are trivial. So, $Y$ is isomorphic to $\A^1_k\times \A^{N-1}_k$, which allows us to conclude our proof of Theorem~\ref{thmdmlslao} by the induction hypothesis.
\end{proof}

{\bf Therefore, from now on, we assume that $\det A$ is a non-constant polynomial in $k[x]$.} 

\begin{lemma}
\label{N=1 automorphism}
Theorem~\ref{thmdmlslao} holds when $N=1$.
\end{lemma}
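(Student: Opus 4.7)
The overall plan: after conjugating $g$ to either $x\mapsto x+b$ or $x\mapsto cx$ and replacing $f$ by an iterate, we distinguish cases according to the behaviour of the base point $x_0$ under $g$. If $g$ has finite order or $x_0$ is a $g$-fixed point, then the orbit of $\alpha$ lies in the union of the finitely many vertical fibres $\{g^i(x_0)\}\times\A^1$, and the infinite-intersection hypothesis forces $C$ to coincide with one of these fibres --- a curve that is manifestly $f$-periodic. We may therefore assume $g$ is of infinite order and $x_0$ is not fixed by $g$, so the sequence $x_n:=g^n(x_0)$ consists of pairwise distinct scalars. A vertical curve $C=\{x_*\}\times\A^1$ then meets the orbit in at most one point, contradicting the hypothesis; so we may restrict to $C=V(P)$ for an irreducible polynomial $P\in k[x,y]$ of degree $d\ge1$ in $y$.

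Split further according to whether the $y$-coordinates of the orbit eventually vanish. If $y_0=0$ or if $A(x_i)=0$ for some $i\geq0$, then $y_n=0$ for all sufficiently large $n$; the resulting infinitely many distinct points $(x_n,0)$ lying on $C$ force $C=\{y=0\}$ by irreducibility, and this line is manifestly $f$-invariant. The heart of the proof is the remaining case $y_n\neq0$ for every $n$, and we show that infinite intersection cannot occur there. Following the paper's announcement, we specialize to a number field $K$ containing the coefficients of $g,A,P$ and of $\alpha$, and enlarge a finite set $S$ of places of $K$ so that both $(x_n)$ and $y_n=y_0\prod_{i<n}A(x_i)$ become $S$-integral (the $x_n$ lie in a finitely generated subgroup of $K$, and $y_n$ is then a product of $S$-integral values of the polynomial $A$). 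Siegel's theorem on $S$-integer points forces the smooth projective model of $C$ to be $\P^1$ with at most two points at infinity, so that $C$ admits a parametrization by Laurent polynomials.

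The contradiction is produced by a growth comparison at a place $v$ of $K$ with $|x_n|_v\to\infty$. In the translation case, any archimedean place of $K$ works (since $b\neq0$); in the dilation case $g(x)=cx$, Kronecker's theorem applied to the non-torsion element $c$ furnishes an archimedean $v$ with $|c|_v>1$. Because $A$ is non-constant, $|A(x_i)|_v$ grows polynomially in $i$ (translation case) or exponentially in $i$ (dilation case), so
\[
|y_n|_v=|y_0|_v\prod_{i=0}^{n-1}|A(x_i)|_v
\]
grows factorially in $n$ (translation) or with a quadratic-in-$n$ exponent (dilation). On the other hand, the relation $P(x_n,y_n)=0$ --- equivalently, the Laurent parametrization of $C$ supplied by Siegel --- yields the uniform estimate $|y_n|_v=O(|x_n|_v^r)$ for a constant $r$ depending only on $P$. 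This upper bound grows only polynomially in $n$ in the translation case, and singly-exponentially in the dilation case, strictly slower than the true growth of $|y_n|_v$. This contradiction rules out the ``remaining case'' and completes the proof.

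The principal technical obstacle is verifying that the Puiseux-type bound $|y_n|_v=O(|x_n|_v^r)$ really applies to the specific orbit coordinate $y_n$, not merely to an abstract root of $P(x_n,Y)=0$. Siegel's theorem enters precisely here, beyond a mere application to count integer points: the Laurent parametrization of $C$ allows one to locate, for $n$ large, the preimage parameter $t_n$ in a prescribed region of the normalisation at the place $v$, ensuring that the corresponding bound on $|Y(t_n)|_v=|y_n|_v$ is the one coming from the Newton polygon at infinity rather than a vacuous one from some bounded branch.
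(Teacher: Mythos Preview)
Your approach is entirely different from the paper's. The paper dispatches $N=1$ in one line: under the standing assumption that $\det A=A(x)$ is a non-constant polynomial (hence nonzero), the map $f$ is a birational polynomial self-map of $\A^2$, and the result follows directly from \cite{Xie-Math}. You instead run a self-contained growth argument exploiting the factorization $y_n=y_0\prod_{i<n}A(x_i)$: once $A$ is non-constant and $|x_n|_v\to\infty$, the product $|y_n|_v$ outgrows any fixed power of $|x_n|_v$, which is incompatible with $(x_n,y_n)\in C$. This avoids the substantial machinery of \cite{Xie-Math} at the price of being tailored to the skew-linear form of $f$.

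Two remarks. First, there is a genuine gap in the dilation case: Kronecker's theorem only yields an archimedean $v$ with $|c|_v>1$ when $c$ is an algebraic \emph{integer}; for example $c=(3+4i)/5$ is not a root of unity yet lies on the unit circle at both archimedean places of $\Q(i)$. The fix is to use the product formula to obtain \emph{some} place $v$ (possibly non-archimedean) with $|c|_v>1$; your growth comparison then goes through verbatim at that place. Relatedly, ``specialize to a number field $K$'' is not justified when the data has positive transcendence degree over $\Q$; you should rather embed the ambient finitely generated $\Z$-algebra into $\C$, using the freedom in the embedding to arrange $|c|>1$ when $c$ is transcendental. Second, Siegel's theorem and the Laurent parametrization are superfluous for your purposes: the estimate $|y_n|_v=O(|x_n|_v^r)$ already follows from the elementary root bound for $P(x_n,Y)=\sum_j a_j(x_n)Y^j$, which controls \emph{all} roots simultaneously once $|x_n|_v$ is large enough that the leading coefficient $a_d(x_n)$ dominates. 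Your final paragraph's worry about which branch $y_n$ lies on is therefore unnecessary.
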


\begin{proof}[Proof of Lemma~\ref{N=1 automorphism}.]
Since $\det A(x)\ne 0$, then $f$ is a birational automorphism of $\A^2$; hence,  this case is covered by the result of \cite{Xie-Math}.
\end{proof}

{\bf Therefore, from now on, we may assume that $N\ge 2$.}

\begin{lemma}
\label{a root of unity automorphism}
Theorem~\ref{thmdmlslao} holds if $g(x)=x$.
\end{lemma}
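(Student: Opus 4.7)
The plan is to exploit that $g(x)=x$ trivially fibers the dynamics over the base. I would begin by observing that $f^n(x,y)=(x,A(x)^n y)$, so writing $\alpha=(\alpha_1,\alpha_2)$, the entire orbit $\OO_f(\alpha)$ is contained in the fiber $F:=\{\alpha_1\}\times\A^N_k$. Since $F$ is a closed subvariety of $\A^1_k\times\A^N_k$ and $C$ is an irreducible curve, the infinitude of $\OO_f(\alpha)\cap C\subseteq F\cap C$ forces $C\subseteq F$, for otherwise $C\cap F$ would be a proper closed subset of $C$, hence finite. Writing $C=\{\alpha_1\}\times C'$ for an irreducible curve $C'\subset\A^N_k$, and setting $M:=A(\alpha_1)\in M_{N\times N}(k)$, the problem reduces to the following linear statement: if the $M$-orbit of $\alpha_2$ meets $C'$ infinitely often, then $C'$ is periodic under $M$. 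This suffices because $f^n(C)=\{\alpha_1\}\times M^n(C')$, so $M$-periodicity of $C'$ is exactly $f$-periodicity of $C$.

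In the case $\det M\neq 0$, the map $M$ is a linear automorphism of $\A^N_k$, and I would simply invoke \cite{Bell2010}, which establishes the Dynamical Mordell--Lang Conjecture for all \'etale endomorphisms of a quasiprojective variety, to conclude that $C'$ is $M$-periodic. In the remaining case $\det M=0$, I would reduce to the invertible case by passing to the stable image $V:=\bigcap_{j\geq 0}M^j(\A^N_k)$; by dimension, this chain stabilizes at some $V=M^r(\A^N_k)$, and on $V$ the restriction $M|_V$ is invertible. For every $n\geq r$ we have $M^n\alpha_2\in V$, so infinitely many points of the $M$-orbit of $\alpha_2$ lying on $C'$ must lie in $C'\cap V$; by irreducibility of $C'$, this forces $C'\subseteq V$. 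Fixing a linear isomorphism $V\cong\A^m_k$, I would then apply \cite{Bell2010} to the automorphism $M|_V$ of $\A^m_k$ and the irreducible curve $C'\subset V$, meeting the $M|_V$-orbit of $M^r\alpha_2$ infinitely often, to conclude that $C'$ is $M|_V$-periodic, and hence $M$-periodic.

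The main step requiring a little care is the reduction in the non-invertible case, namely verifying that the tail of the orbit actually lies in $V$ and that the irreducible curve $C'$ must therefore be contained in $V$; once this is in place, the rest is a clean appeal to the known Dynamical Mordell--Lang for linear automorphisms of affine space via \cite{Bell2010}. No Siegel-type input or Quillen--Suslin argument is needed here, in contrast to the earlier reductions in this section, because the $g=\id$ hypothesis collapses the problem to a single fiber from the outset.
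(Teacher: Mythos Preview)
Your proof is correct and follows essentially the same approach as the paper's: both reduce to the single fiber $\{\alpha_1\}\times\A^N_k$, then pass to an invariant linear subspace on which the induced linear map is an automorphism in order to invoke \cite{Bell2010}. Your write-up is more explicit than the paper's (you spell out the stable image $V=\bigcap_j M^j(\A^N_k)$ and why $C'\subseteq V$), whereas the paper simply asserts the existence of such an invariant subvariety $W$ after possibly replacing $\psi$ and $\beta$ by iterates.
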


\begin{proof}[Proof of Lemma~\ref{a root of unity automorphism}.]
Theorem \ref{thmdmlslao} holds in this case, since it reduces to proving the Dynamical Mordell-Lang Conjecture for a linear self-map on $\A^N$. Indeed, given a linear map $\psi:\A^N\lra \A^N$ and a point $\beta\in \A^N(k)$,  at the expense of replacing $\psi$ by an iterate (and also replacing $\beta$ by a suitable $\psi^m(\beta)$) we can find a subvariety $\beta \in W\subset \A^N$ invariant under $\psi$ and moreover, $\psi|_W:W\lra W$ is an automorphism; then \cite[Theorem~4.1]{Bell2010} provides the desired conclusion.  
\end{proof}

{\bf From now on, we let $g(x)=ax+b$ with $a\in k^*$ and $b\in k$.}

If $a$ is a root of unity, then at the expense of replacing $f$ by an iterate, we may assume $a=1$. Since the case $g(x)=x$ was proven in Lemma~\ref{a root of unity automorphism}, we are left with the possibility that $g(x)=x+1$ (again, at the expense of replacing $g$ and therefore $f$ by a conjugate through a suitable automorphism).

If $a$ is not a root of unity, then after a suitable conjugation by an automorphism, we may assume $g(x)=ax$. 

{\bf Therefore, from now on, we may assume that either $g(x)=x+1$, or $g(x)=ax$ for some $a\in k\setminus\{0\}$, which is not a root of unity.}

Denote by  $S$ the set of roots of $\det A=0$. Then $S$ is a non-empty finite set of points in $\A^1(k)$.  Let $I$ be the set of fixed points of $g$. Then $I=\emptyset$ (if $g(x)=x+1$) or $I=\{0\}$ (if $g(x)=ax$; note that $a\in k^*$ is not a root of unity). Moreover $I$ is exactly the set of (pre)periodic points of $g.$

Denote by $\pi:\A^1_k\times\A^N_k\lra \A_k^1$ the projection onto the first coordinate. Then $\pi^{-1}(S)$ is the critical set of $f$. If $\pi(\alpha)\in I$ then Theorem~\ref{thmdmlslao} holds (similar to the case when $g(x)=x$ from Lemma~\ref{a root of unity automorphism}). 

{\bf Thus from now on, we assume that 
$\pi(\alpha)\not\in I$. So, there exists $M\geq 0$, such that for all $n\geq M$, we have that $\pi(f^n(\alpha))\not\in S.$}

Now, there exists a finitely generated $\Z$-algebra $R$  such that $f$, $C$, $\alpha$ and all points of $S$ are defined over $R$. Then $f^n(\alpha)$ are defined over $R$ for all $n\geq 0$. 
Suppose that $\OO_f(\alpha)\cap C$  is infinite. By Siegel's Theorem \cite[Theorems~8.2.4~and~8.5.1]{Siegel}, $C$ has at most two branches at the infinity.

Moreover, for all $i\geq 0$, denote by $C^{-i}$ the strict transform of $C$ under $f$; then we have
$$\{n\geq M\colon f^n(\alpha)\in C\}\subseteq \{n\colon M\le n\le M+i\}\cup \{n\geq M+i+1\colon f^{n-i}(\alpha)\in C^{-i}\}.$$  
Since $\OO_f(\alpha)\cap C$  is infinite, $\OO_f(\alpha)\cap C^{-i}$  is infinite for all $i\geq 0$. So, by Siegel's Theorem, $C^{-i}$ has at most two branches at the infinity for all $i\geq 0$.

\begin{lemma}
\label{S in I}
With the above notation, Theorem~\ref{thmdmlslao} holds if $S\subseteq I$.
\end{lemma}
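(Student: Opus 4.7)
The plan is to reduce the claim to the automorphism case of the Dynamical Mordell-Lang Conjecture established in \cite{Bell2010}, by showing that the hypothesis $S\subseteq I$ forces $f$ to restrict to a genuine automorphism on an open subset containing the orbit $\OO_f(\alpha)$. First I would narrow down the remaining cases. Since $\det A$ is assumed non-constant and $k$ is algebraically closed, $S$ is a nonempty finite subset of $\A^1(k)$. If $g(x)=x+1$, then $I=\emptyset$, which is incompatible with $\emptyset\neq S\subseteq I$; hence we must be in the multiplicative case $g(x)=ax$ with $a\in k^*$ not a root of unity, so that $I=\{0\}$. The inclusion $S\subseteq I$ then forces $S=\{0\}$, equivalently $\det A(x)=cx^d$ for some $c\in k^*$ and $d\geq 1$.

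Set $U:=\A^1\setminus\{0\}$. Then $g$ restricts to an automorphism of $U$ (with inverse $x\mapsto a^{-1}x$), and for every $x\in U$ the matrix $A(x)$ is invertible. Consequently, $f$ restricts to an automorphism of $U\times\A^N$, with inverse given by $(x,y)\mapsto (a^{-1}x,\, A(a^{-1}x)^{-1}y)$. Moreover, the standing hypothesis $\pi(\alpha)\notin I=\{0\}$ implies $\pi(f^n(\alpha))=a^n\pi(\alpha)\neq 0$ for all $n\geq 0$, so $\OO_f(\alpha)\subset U\times\A^N$.

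Now let $C^{\circ}:=C\cap(U\times\A^N)$. If $C\subseteq\pi^{-1}(0)$, then $\OO_f(\alpha)\cap C=\emptyset$, contradicting the hypothesis; hence $C^{\circ}$ is a nonempty Zariski open subset of the irreducible curve $C$, and viewed inside $U\times\A^N$ it is a closed irreducible $1$-dimensional subvariety whose closure in $\A^1\times\A^N$ equals $C$. I would then apply \cite{Bell2010} to the automorphism $f|_{U\times\A^N}$, the point $\alpha\in U\times\A^N$, and the closed subvariety $C^{\circ}\subset U\times\A^N$, concluding that the set $\{n\in\N_0\colon f^n(\alpha)\in C^{\circ}\}$ is a finite union of arithmetic progressions. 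Since $\OO_f(\alpha)\cap C^{\circ}=\OO_f(\alpha)\cap C$ is infinite, this set contains an arithmetic progression $\{n_0+jk\colon j\in\N_0\}$ with $k\in\N$, and the Zariski closure of $\{f^{n_0+jk}(\alpha)\colon j\geq 0\}$ is then an $f^k$-invariant closed subvariety of $C$ containing infinitely many points. Irreducibility of $C$ forces this closure to equal $C$, whence $f^k(C)=C$ and $C$ is periodic under $f$.

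The only non-trivial point in this plan is recognizing that the inclusion $S\subseteq I$ combined with the previous reductions pins down both the form of $g$ and the vanishing locus of $\det A$, so that the complement of a single point on the base is simultaneously $g$-stable and an open subset on which $A$ is invertible; once this is noted, the lemma reduces to a direct invocation of \cite{Bell2010}.
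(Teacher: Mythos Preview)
Your proof is correct and follows essentially the same route as the paper: both arguments observe that $S\subseteq I$ forces $I=\{0\}$ (hence $g(x)=ax$) and that $f$ restricts to an \'etale self-map (in fact an automorphism, as you note) of $\pi^{-1}(\A^1\setminus\{0\})$, at which point one invokes \cite{Bell2010}. The paper's proof is terser---it just says ``$f|_{\pi^{-1}(\A^1\setminus\{0\})}$ is unramified'' and cites \cite[Theorem~1.3]{Bell2010}---whereas you spell out explicitly that the restriction is an automorphism and unwind the standard passage from ``finite union of arithmetic progressions'' to ``$C$ is periodic''; but the underlying idea is identical.
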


\begin{proof}[Proof of Lemma~\ref{S in I}.]
If $S\subseteq I$, then $I=\{0\}$ and moreover, $f|_{\pi^{-1}(\A^1\setminus \{0\})}$ is unramified. Then we conclude our proof by using \cite[Theorem~1.3]{Bell2010}.
\end{proof}

{\bf Hence, from  now on, we assume that $S\not\subseteq I$.} 

So, there exists a point $s\in S\setminus I$ such that for all $n\geq 1,$ we have that $g^n(s)\not\in S$. Denote by $F_n:=f^n(\pi^{-1}(s))$ for each $n\geq 0$; then $F_n$ is a linear subspace of $$\pi^{-1}(g^n(s))\simeq \A^N$$ of dimension $d\leq N-1.$ If there exist three integers $1\leq n_1<n_2<n_3$ such that $C\cap \pi^{-1}(g^{n_i}(s)) \not\subseteq F_{n_i}$ (for $i=1,2,3$), then $C^{-n_3}$ has at least three branches at infinity, which is a contradiction. So there exists $B\geq 1$, such that for all integers $n\geq B,$ we have that 
$$C\cap \pi^{-1}(g^n(s))\subseteq F_n.$$

Let $\Gr_k(d,N)$ be the Grassmannian parametrizing all linear subvarieties of dimension $d$ contained in $\A^n_k$ and let $F:\A^1_k\times \Gr_k(d, N)\lra \A^1_k\times \Gr_k(d, N)$ be the birational map defined by $(x, V)\mapsto (g(x), A(x)(V))$, where for each $x\in k$, we denote by $A(x)(V)$ the image of $V$ under the linear map $A(x):\A^N_k\lra \A^N_k$.  We see that $F$ is well defined when $x\not\in S$. Let 
$$Z:= \{(x,V)\colon C\cap \pi^{-1}(x)\subseteq V\}.$$ 
Then $Z$ is a proper subvariety of $\A^1_k\times \Gr_k(d, N)$. For all $n\geq B$, we have $(g^n(s),F_n)\in Z.$ Let $W$ be the Zariski closure of $\{(g^n(s),F_n)\colon n\geq B\}$ in $\A^1_k\times \Gr_k(d, N)$. We have $\dim W\geq 1$ and $W\subseteq Z.$ At the expense of replacing $B$ by a larger integer, we may suppose that all irreducible components of $W$ have positive dimension. Then we have $F(W)=W.$ Moreover we have $\pi(W)=\A^1.$ For any $x\in \A^1$, denote by $W_x$ the fiber of $W$ over $x.$ Let 
$$U':=\left\{(x,y)\in \A^1_k\times\A^N_k\colon y\in \bigcap _{V\in W_x}V\right\}.$$ 
There exists a finite set $D$ of $\A^1(k)$, such that $U'\cap \pi^{-1}(\A^1\setminus D)$ is a vector bundle on $\A^1\setminus D$. Denote by $U$ the Zariski closure of $U'\cap \pi^{-1}(\A^1\setminus D).$ We have $C\subseteq U$ and $f(U)\subseteq U.$ Since $\OO_f(\alpha)\cap U$ is not empty, after replacing $\alpha$ by some $f^m(\alpha)$, we may suppose that $\alpha\in U.$

There exists an integer $B'\geq B$ such that for all $n\geq B'$, we have that  $g^n(s)\not\in D$. Then for all $n\geq B'$, 
$$U\cap \pi^{-1}(g^n(s))=U'\cap \pi^{-1}(g^n(s))\subseteq F_n.$$ 
So, $U$ is an irreducible proper subvariety of $\A^1\times \A_k^N$.

\begin{lem}\label{lemvecbunext}
Let $D$ be a finite set of $\A^1(k)$, let $U$ be an irreducible proper subvariety of $\A^1\times\A_k^N$. If $U\cap \pi^{-1}(\A^1\setminus D)$ is a vector bundle on $\A^1\setminus D$, then $U$ is a vector bundle over $\A^1.$
\end{lem}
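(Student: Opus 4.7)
The plan is to realize $U$ as the zero locus of a saturated submodule of linear forms in $y$, which will automatically be a sub-vector bundle of $\A^1\times\A^N$.

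First, I would verify that $U$ is a cone in the $y$-direction. Since $U\cap\pi^{-1}(\A^1\setminus D)$ is a sub-vector bundle, it is stable under the scaling $\G_m$-action $(x,y)\mapsto(x,\lambda y)$ and contains the zero section $(\A^1\setminus D)\times\{0\}$. Taking Zariski closures, $U$ inherits $\G_m$-invariance, and letting $\lambda\to 0$ on $\G_m$-orbits in $U$ shows that $\A^1\times\{0\}\subseteq U$; in particular $\pi(U)=\A^1$, so the ideal $I:=I(U)\subset k[x,y_1,\ldots,y_N]$ is homogeneous in the $y$-grading and satisfies $I\cap k[x]=(0)$.

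Next, set $R:=k[x]$ and let $I_1\subseteq R^N$ be the $R$-module of linear forms in $y$ lying in $I$. I would form its saturation
\[
M:=\{v\in R^N : hv\in I_1\text{ for some nonzero }h\in R\}.
\]
Two observations are decisive. First, $M\subseteq I$: because $U$ is irreducible, $I$ is prime, and $I\cap R=(0)$ makes every nonzero $h\in R$ a nonzerodivisor modulo $I$, forcing $v\in I$ whenever $hv\in I$. Second, $R^N/M$ is finitely generated and torsion-free over the PID $R$, hence free, so $M$ is a direct summand: $R^N=M\oplus M'$ with $M'$ free of some rank. Consequently the subvariety $V_M\subset\A^1\times\A^N$ cut out by the linear forms in $M$ is a trivial sub-vector bundle $\A^1\times\A^{\mathrm{rank}\,M'}\hookrightarrow\A^1\times\A^N$, irreducible of dimension $1+\mathrm{rank}\,M'$.

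To conclude, I would match ranks and dimensions. With $f:=\prod_{s\in D}(x-s)$, the hypothesis that $U\cap\pi^{-1}(\A^1\setminus D)$ is a sub-bundle of rank $d$ forces $I_1\cdot R[1/f]=M\cdot R[1/f]$ to be a direct summand of corank $d$ in $R[1/f]^N$, so $\mathrm{rank}\,M'=d$ and $\dim V_M=d+1$. On the other hand $\dim U=d+1$ as well, since $U$ is the Zariski closure of a rank-$d$ vector bundle over the one-dimensional base $\A^1\setminus D$. Combined with $U\subseteq V_M$ from the inclusion $M\subseteq I$, irreducibility on both sides yields $U=V_M$. The main point to watch is precisely this last rank/corank bookkeeping, which guarantees that $V_M$ is not strictly larger than $U$; it is a standard consequence of the PID structure of $k[x]$, so no Quillen--Suslin input is needed here (the base being one-dimensional).
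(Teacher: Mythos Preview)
Your proof is correct and takes a genuinely different route from the paper's. The paper argues by induction on $N$: over the generic point of $\A^1$ the fiber $U_\eta$ lies in some hyperplane $\sum a_i(x)y_i=0$ with the $a_i\in k[x]$ chosen coprime, the resulting hypersurface $H\subset\A^1\times\A^N$ is itself a (trivial) vector bundle containing $U$, and one finishes by induction inside $H\cong\A^1\times\A^{N-1}$; the paper even invokes Quillen--Suslin at this step. Your argument instead handles all the linear equations at once: you take the module $I_1$ of linear-in-$y$ relations and observe (via primality of $I$ and $I\cap k[x]=0$) that $I_1$ is already saturated, so $R^N/I_1$ is torsion-free over the PID $k[x]$, hence free, and $V_{I_1}$ is a trivial sub-bundle of the right dimension containing $U$. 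This is shorter and makes the role of the one-dimensional base completely transparent; as you note, Quillen--Suslin is unnecessary. One cosmetic point: since you prove $M\subseteq I$ and every element of $M$ is linear, you have in fact shown $M=I_1$, so the passage through the saturation is redundant (though harmless).
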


\begin{proof}[Proof of Lemma \ref{lemvecbunext}.]
We prove this lemma by induction on $N.$
If $N=1$, then $U=\{y=0\}.$

Now we suppose that $N\geq 2.$ Denote by $\eta$ the generic point of $\A^1$; so, the generic fiber $U_{\eta}$ is a linear subspace of the generic fiber $\A^N_{k(x)}.$ There exists a hyperplane $H'\subseteq \A^N_{k(x)}$ containing $U_{\eta}$ and defined by $\sum_{i=1}^Na_i(x)y_i=0$. We may suppose that $a_i(x)\in k[x]$ for all $i=1,\dots,N$ and also that the polynomials $a_1(x),\dots,a_N(x)$ are coprime.  Denote by $H$ the subvariety of $\A^1\times\A_k^N$ defined by $\sum_{i=1}^Na_i(x)y_i=0$. Then $H$ is a vector bundle on $\A^1$ and $U\subseteq H$. Quillen-Suslin's theorem (see \cite{Quillen}) yields that all vector bundles on $\A^1_k$ are trivial. So $H$ is isomorphic to $\A^1_k\times \A^{N-1}_k$, which allows us to conclude our proof by the induction hypothesis.
\end{proof}

Thus Lemma~\ref{lemvecbunext} yields that $U$ is a vector bundle on $\A^1_k$. Applying again Quillen-Suslin's theorem we obtain that $U$ is trivial on $\A_k^1$. 
By using the induction hypothesis on the  restriction of $f$ on $U$, we conclude our proof of Theorem~\ref{thmdmlslao}.

%%%%%%%%%%%%%%%%%%%%%%%%%%%%%%%%%%%%%%%%%%%%%%%%%%%%%%%%%%%%%%%%%%%%%%%%%%%%%%%
%%%%%%%%%%%%%%%%%%%%%%%%%%%%%%%%%%%%%%%%%%%%%%%%%%%%%%%%%%%%%%%%%%%%%%%%%%%%%%%

\section{Proof of Theorem~\ref{thmdmlsenonlinear}}
\label{section proofs 2}

%%%%%%%%%%%%%%%%%%%%%%%%%%%%%%%%%%%%%%%%%%%%%%%%%%%%%%%%%%%%%%%%%%%%%%%%%%%%%%
%%%%%%%%%%%%%%%%%%%%%%%%%%%%%%%%%%%%%%%%%%%%%%%%%%%%%%%%%%%%%%%%%%%%%%%%%%%%%%%

We work under the hypothesis of Theorem~\ref{thmdmlsenonlinear}. Denote by $\pi:\P^1_k\times \A^N_k\to \P^1_k$ the projection to the first factor.

\begin{lemma}
\label{preperiodic non-automorphism}
Theorem~\ref{thmdmlsenonlinear} holds if $\pi(\alpha)$ is preperiodic. 
\end{lemma}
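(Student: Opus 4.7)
The plan is to use the preperiodicity of $\pi(\alpha)$ under $g$ to trap the whole forward orbit inside a single fibre of $\pi$, and then to invoke the Dynamical Mordell--Lang Conjecture for a linear self-map of affine space, which is known in the étale setting.

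First I would perform some standard reductions. Replacing $\alpha$ by a suitable $f^m(\alpha)$ (which only discards finitely many terms from the orbit and so preserves Zariski density of $\OO_f(\alpha)\cap Y$ in $Y$) and $f$ by a suitable iterate $f^q$ (which preserves both the hypotheses of the theorem and the conclusion that $Y$ is periodic under $f$, cf.~\cite[Chapter~3]{Bell2016}), I may assume that $x_0:=\pi(\alpha)$ is a fixed point of $g$. Consequently $\OO_f(\alpha)\subseteq \pi^{-1}(x_0)\cong \A^N$. Since $Y$ is irreducible and $\OO_f(\alpha)\cap Y$ is Zariski dense in $Y$ while being contained in the closed subvariety $Y\cap \pi^{-1}(x_0)$, it follows that $Y\subseteq \pi^{-1}(x_0)$.

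Next, identify $\pi^{-1}(x_0)$ with $\A^N$ and write $\alpha=(x_0,\alpha')$. The hypothesis that $f^n(\alpha)\notin I(f)$ for every $n\geq 0$ guarantees that all iterates of the $A(x_0)$-action on $\alpha'$ are well defined: in the generic case $A$ has no pole at $x_0$ and we simply set $L(y):=A(x_0)y$; in the degenerate case where some entry of $A$ has a pole at $x_0$, the indeterminacy-avoidance hypothesis forces $\OO_f(\alpha)$ (and hence, by Zariski density, $Y$) into the linear subspace of $\pi^{-1}(x_0)$ on which the polar parts of $A$ vanish, a subspace that is $A(x_0)$-stable, so we simply replace $\A^N$ by this subspace. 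Either way, we arrive at a linear self-map $L:\A^N\to\A^N$ such that the fibre coordinate of $\OO_f(\alpha)$ is $\OO_L(\alpha')$, and $Y$ is an irreducible positive-dimensional subvariety of $\A^N$ meeting $\OO_L(\alpha')$ in a Zariski dense set.

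It remains to establish the Dynamical Mordell--Lang Conjecture for the linear self-map $L$ on $\A^N$. The descending chain of linear subspaces $\A^N\supseteq L(\A^N)\supseteq L^2(\A^N)\supseteq\cdots$ stabilises at an $L$-invariant linear subspace $V$ on which $L$ restricts to an automorphism; replacing $\alpha'$ by a late-enough iterate we may assume $\alpha'\in V$, whereupon Zariski density forces $Y\subseteq V$. The map $L|_V$ is an automorphism of $V\cong\A^d$, in particular étale, so \cite[Theorem~1.3]{Bell2010} applies and yields that $Y$ is periodic under $L|_V$, hence under $f$ (after unwinding the initial iterate-and-translate reduction). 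The only real subtlety is the bookkeeping around possible poles of $A$ at $x_0$, but, as sketched above, the hypothesis $\OO_f(\alpha)\cap I(f)=\emptyset$ forces that case into the same framework; I do not expect any genuine obstacle beyond this.
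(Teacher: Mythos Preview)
Your proof is correct and follows the same route as the paper: reduce to a $g$-fixed fibre, observe that $f$ restricts there to a linear self-map of $\A^N$, stabilise the image chain to land on a subspace where the map is an automorphism, and invoke \cite{Bell2010}. You are in fact more careful than the paper about the possibility that $A$ has a pole at the fixed point (the paper simply asserts that $f|_{\pi^{-1}(x_0)}$ is linear); your handling of that case is slightly imprecise---the subspace where the polar parts vanish need not itself be $A(x_0)$-stable---but replacing it by its maximal $L$-stable subspace (equivalently, the decreasing intersection $\bigcap_{n\ge 0}\{y: L^n(y)\text{ is defined}\}$, which contains the orbit by hypothesis) fixes this immediately.
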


\begin{proof}[Proof of Lemma~\ref{preperiodic non-automorphism}.]
After replacing $\alpha$ by $f^m(\alpha)$ for a suitable $m\geq 0$ and $f$ by some iterate of $f$, we may assume that $\pi(\alpha)$ is fixed by $g$.
Since $f|_{\pi(\alpha)}$ is a linear morphism, then \cite[Theorem~4.1]{Bell2010} provides the desired conclusion (see also our proof of Lemma~\ref{a root of unity automorphism}). 
\end{proof}

{\bf So, from now on, we assume that $\pi(\alpha)$ is not preperiodic.}

We split our proof into two cases (which will be proved in Subsections~\ref{subsec:1}~respectively~\ref{subsec:2}) depending on whether $f$ is dominant, or not.

\subsection{The case where the map is dominant}
\label{subsec:1}

We first treat the case when $f$ is dominant; we will see in Subsection~\ref{subsec:2} that the general case may be reduced to this special case.

Denote by $S$ the set of points $x\in \P^1(k)$ that $f$ is not locally \'etale along $\pi^{-1}(x).$
In particular, $\pi(I(f))\subseteq S$ and $S$ is finite. 
 
There exists a subfield $K_1$ of $k$ which is finitely generated over $\bar{\Q}$ such that $g,f,S$ and $\alpha$ are all defined over $K_1$. 
Now we may assume that $k=\overline{K_1}.$

There exists a finitely generated $\bar{\Q}$-subalgebra $A$ of $K_1$ such that $K_1=\Frac A$. Let $$A:=\bar{\Q}[x_1,\dots,x_{m_0}]/(F_1,\dots, F_{\ell_0})$$
for some $m_0,\ell_0\in\N$ and some suitable polynomials $F_i$; we may assume that $\Spec A$ is smooth. There exists a finite extension $M/\Q$ such that all coefficients of the polynomials $F_1,\dots,F_{\ell_0}$ are contained in $M$. Furthermore, we let $\cO_M$ be a finitely generated subring of $M$, whose fraction field equals $M$, such that each coefficient of $F_i$ is contained in $\cO_M$. Let  $$R:=\cO_{M}[x_1,\dots,x_{m_0}]/(F_1,\dots, F_{\ell_0});$$ 
then $R$ is a subring of $A$, which is finitely generated over $\Z$. Also, we  let $K:=\Frac(R)$. At the expense of replacing $M$ by another finitely generated extension, we may assume that $g,f,S$ and $\alpha$ are all defined over $K.$
We may identify $f$ as $f_K\otimes_Kk$ where $f_K$ is a rational self-map on $\P^1_{K}\times \A^N_{K}.$

The next proposition is the key technical ingredient for our proof. Proposition~\ref{lemexiredu} yields the existence of a nonarchimedean place $v$ which meets several good hypotheses regarding the reduction modulo $v$ for our dynamical system; in particular, this allows us to find a suitable $p$-adic analytic parametrization of our orbit (using the $p$-adic arc lemma, as constructed in \cite[Chapter~4]{Bell2016}).
 
\begin{pro}\label{lemexiredu}
There exists a finite extension $L$ over $\Q$, a nonarchimedean place $v$ of $L$, and an embedding $K\hookrightarrow L_v$ where $L_v$ is the $v$-adic completion of $L$ (with ring of $v$-adic integers denoted by $\cO_v$),  
a rational self-map $f_{\cO_v}:\P^1_{\cO_v}\times \A^N_{\cO_v}\dashrightarrow\P^1_{\cO_v}\times \A^N_{\cO_v}$ of the form $(x,y)\mapsto (g_{\cO_v}, A_{\cO_v}(x)(y))$ such that 
\begin{points}
\item $f_{L_v}=f_K\otimes_KL_v$ where $f_{L_v}$ is the restriction of $f_{\cO_v}$ on the generic fiber;
\item the restriction $f_v$ of $f_{\cO_v}$ on the special fiber is a dominant rational self-map on $\P^1_{\F_v}\times \A^N_{\F_v}$ where $\F_v$ is the residue field of $L_v;$
\item the restriction $g_v$ of $g_{\cO_v}$ on the special fiber is an endomorphism of $\P^1_{\F_v}$ of degree $\deg g$;
\item the set $S_v$ of points $x\in \P^1(\F_v)$ with the property that $f_{\cO_v}$ is not locally \'etale along $\pi_p^{-1}(x)$ is the specialization of $S$ on the special fiber;
\item there exists $r\geq 0$, such that for $n\geq r$, the specialization of $g_K^n(\pi(\alpha))$ on the special fiber is not contained in $S_v.$
\end{points}
\end{pro}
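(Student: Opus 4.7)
The plan is to produce $v$ and the embedding $K\hookrightarrow L_v$ as a suitable specialization of the finitely generated $\Z$-algebra $R$. Let $d=\trd_\Q K$. By Noether normalization (after inverting some nonzero element of $R$ if necessary) we may present $R$ as a finite integral extension of $\cO_M[t_1,\dots,t_d]$. Each of conditions (ii)--(iv) is cut out by a nonempty Zariski open subset of $\Spec R$: (iii) requires the leading coefficient of $g$ (as a morphism of $\P^1$) to remain nonzero after specialization; (ii) amounts to $\det A\not\equiv 0$ after specialization (so that $f_v$ remains dominant); (iv) requires the finitely many points of $S$ to stay pairwise distinct after specialization. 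Let $U\subseteq\Spec R$ be the intersection of these open subsets.

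\textbf{Construction of $v$ and the embedding.} Pick a number field $L$ and a place $v$ of $L$ of sufficiently large residue characteristic. Since $\cO_v$ has uncountable transcendence degree over $\Q$, we may choose $\tilde\tau_1,\dots,\tilde\tau_d\in\cO_v$ algebraically independent over $\Q$ such that $(\tilde\tau_i)$ maps to a point in the image of $U$ under Noether normalization. This defines an embedding $\cO_M[t_1,\dots,t_d]\hookrightarrow\cO_v$; after enlarging $L$ (a further finite extension) so that $L_v$ contains the values at $(\tilde\tau_i)$ of a chosen set of generators of $R$ over $\cO_M[t_1,\dots,t_d]$, the embedding extends to $R\hookrightarrow\cO_v$, hence to $K\hookrightarrow L_v$. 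Base-changing $f_K$ along $R\hookrightarrow\cO_v$ then yields the desired model $f_{\cO_v}$: its generic fiber is $f_K\otimes_K L_v$, giving (i), and (ii)--(iv) hold by the choice $(\tilde\tau_i)\in U$.

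\textbf{Condition (v) and the main obstacle.} Because $\pi(\alpha)$ is not $g_K$-preperiodic, for each $s\in S$ the equation $g_K^n(\pi(\alpha))=s$ has at most one solution in $n$: two solutions $n_1<n_2$ would force $g_K^{n_2-n_1}(s)=s$ and hence $\pi(\alpha)$ preperiodic, a contradiction. Thus the set $\{n\geq 0\colon g_K^n(\pi(\alpha))\in S\}$ is finite in $K$. The specialized orbit $(g_v^n(\overline{\pi(\alpha)}))_{n\geq 0}\subset\P^1(\F_v)$ is eventually periodic, entering some cycle $C\subset\P^1(\F_v)$, and condition (v) reduces to $C\cap S_v=\emptyset$. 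For $v$ of sufficiently large residue characteristic, $|S_v|$ remains bounded while $|\P^1(\F_v)|$ grows, so $g_v$ possesses periodic cycles disjoint from $S_v$; we then further constrain $(\tilde\tau_i)\in\cO_v^d$ so that $\overline{\pi(\alpha)}\in\P^1(\F_v)$, viewed as a rational function of $(\bar{\tilde\tau}_i)\in\F_v^d$, lies in the basin of attraction of such a good cycle. The main obstacle is the degenerate case $\pi(\alpha)\in\overline{\Q}$, where this evaluation is constant and cannot be tuned via the $(\tilde\tau_i)$; there one appeals to the positivity of the Call--Silverman canonical height $\hat h_g(\pi(\alpha))>0$ (which holds because $\pi(\alpha)$ is non-preperiodic under a map of degree $\geq 2$) to conclude that the orbit of $\pi(\alpha)$ stays $v$-adically away from each $s\in S$ for all but finitely many places $v$ of the number field generated by $\pi(\alpha)$, and picks $v$ from this cofinite set.
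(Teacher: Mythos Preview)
Your setup for (i)--(iv) is sound and parallels the paper's: one shrinks $\Spec R$ to an open $U$ where good reduction holds, and your construction of the embedding $K\hookrightarrow L_v$ via algebraically independent $\tilde\tau_i\in\cO_v$ is essentially equivalent to the paper's Baire-category argument.

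The genuine gap is in condition~(v). Your ``tuning'' argument does not work as stated, because $g_v$, $S_v$, and $\overline{\pi(\alpha)}$ are all determined simultaneously by the single choice of $(\bar{\tilde\tau}_i)\in\F_v^d$: once you have fixed the residues of the $\tilde\tau_i$, the map $g_v$, the set $S_v$, its periodic cycles, \emph{and} the point $\overline{\pi(\alpha)}$ are all pinned down, so there is no residual freedom to move $\overline{\pi(\alpha)}$ into the basin of a pre-chosen good cycle. You would need to exhibit at least one $(\bar{\tilde\tau}_i)$ for which the eventual cycle of $\overline{\pi(\alpha)}$ under $g_v$ avoids $S_v$, and nothing you have written establishes this. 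Your fallback in the ``degenerate case'' is also flawed. First, if $\pi(\alpha)\in\overline\Q$ while $g$ (and $S$) remain transcendental, there is no number-field canonical height $\hat h_g$ to appeal to. Second, even when everything lives over a number field, positivity of $\hat h_g(\pi(\alpha))$ simply says $\pi(\alpha)$ is not preperiodic; it does \emph{not} imply that for cofinitely many $v$ the reduced orbit eventually avoids $S_v$. That is a separate arithmetic statement requiring its own proof.

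The paper supplies exactly the two ingredients you are missing. First, a specialization lemma (proved via Baker's theorem on canonical heights over function fields together with Call--Silverman's specialization of heights) reduces to the case where $g$, $S$, and $\pi(\alpha)$ are all defined over $\overline\Q$ with $\pi(\alpha)$ still non-preperiodic. Second, over the resulting number field one picks a periodic point $\beta$ whose $g$-orbit is disjoint from $S$, and then invokes \cite[Lemma~4.1]{BGKT-MA} to produce a prime $v$ at which the reduction of $\pi(\alpha)$ eventually lands on the reduced orbit of $\beta$; this forces $g_v^n(\overline{\pi(\alpha)})\notin S_v$ for all large $n$. This last lemma is the key Diophantine input your sketch lacks, and it (or something of comparable strength) appears unavoidable.
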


\begin{proof}[Proof of Proposition~\ref{lemexiredu}.]
 Observe that $f_K$ induces a rational self-map $f_{R}:\P^1_{R}\times \A^N_{R}\dashrightarrow\P^1_{R}\times \A^N_{R}$ of the form $(x,y)\mapsto (g_{R}, A_{R}(x)(y))$ such that 
the restriction of $f_{R}$ on the generic fiber is $f_K.$ 

By shrinking $\Spec R$, we may assume that $R$ is regular and moreover, assume the following properties hold:  
\begin{itemize}
\item at every point $t\in \Spec R$, the restriction $f_t$ of $f_{R}$ on the special fiber at $t$ is a dominant rational self-map on $\P^1_{\kappa(t)}\times \A^N_{\kappa(t)}$ where $\kappa(t)$ is the residue field at $t$;
\item the restriction $g_t$ of $g_{R}$ on the special fiber is an endomorphism of $\P^1_{\kappa(t)}$ of degree $\deg g$; and
\item the set $S_t$ of points $x\in \P^1(\kappa(t))$ such that $f_R$ is not locally \'etale along $\pi_t^{-1}(x)$ is the specialization of $S$ on the special fiber at $t$.
\end{itemize}
In particular, we have that  $g_R:\P^1_R\to \P^1_R$ is a regular endomorphism.
 
Let $q:=\pi(\alpha)$; note that $q$ is not preperiodic under $f_K$, according to our earlier assumption. 
 Let $q_R$ be the Zariski closure of $q$ in $\P^1_R.$ 
 For any point $t\in \Spec R$, denote by $\P^1_t$ the fiber of $\P^1_R$ at $t$ and also, let $q_t$ be the unique point in $q_R\cap \P^1_t.$ Indeed, $t\mapsto q_t$ is a section from $\Spec R$ to $\P^1_R.$
 
The next Lemma is employed in our proof of Proposition~\ref{lemexiredu}. 
\begin{lem}\label{lemexistnonrepredu}
Let $A$ be a finitely generated integral $\bar{\Q}$-algebra with $\Frac(A)=B$. Let $g_A:\P^1_A\to \P^1_A$ be a regular and dominant endomorphism of degree  greater than $1$.
 Let $q\in \P^1(B)$ be a point on the generic fiber which is not preperiodic under $g_B.$ Then there exists a $\bar{\Q}$-point $c\in \Spec A$ such that the specialization $q_c$ of $q$ on the fiber at $c$ is not preperiodic.
\end{lem}

\begin{proof}[Proof of Lemma~\ref{lemexistnonrepredu}.]
We let a tower of field extensions $\bar{\Q}=:B_0\subset B_1\subset \cdots \subset B_m:=B$ such that  for each $i=0,\dots, m-1$, we have that $B_{i+1}/B_i$ has transcendence degree equal to $1$. Without loss of generality, we may even assume that each $B_{i+1}$ is the function field of some smooth, projective curve defined over $B_i$.  Therefore, it suffices to prove the following statement.
\begin{claim}
\label{claim:nonpreperiodic}
{\em Let $L_0$ be a field of characteristic $0$ and let $L_1$ be the function field $L_0(C)$ for some smooth, projective curve $C$ defined over $L_0$. Let $g$ be a rational function defined over $L_1$ of degree larger than $1$ and let $x\in \P^1(L_1)$ be a non-preperiodic point under the action of $g$. Then there exists a place $\fp$ of  of the function field $L_1/L_0$ such that $g$ has good reduction at the place $\fp$ and moreover, the specialization of $x$ at $\fp$ (denoted $x_\fp$) is not preperiodic under the action of the corresponding specialization $g_\fp$ of $g$.}
\end{claim}

\begin{proof}[Proof of Claim~\ref{claim:nonpreperiodic}.]
The proof is similar to the one employed for \cite[Proposition~6.2]{GTZ-Inventiones}; the only change is replacing the reference \cite{Benedetto} by \cite{Baker}. Indeed, generalizing the results of \cite{Benedetto}, Baker \cite{Baker}  showed that the canonical height $\hat{h}_g(x)$ is strictly positive if $g$ is not isotrivial for the function field $L_1/L_0$ (note that $x$ is not preperiodic). For more details regarding isotrivial rational functions, see \cite[Section~6]{GTZ-Inventiones}, while for more details about the canonical height $\hat{h}_g(\cdot)$ associated to a rational function of degree larger than one, see \cite{Call-Silverman}. We note that Chatzidakis-Hrushovski \cite{C-Z-1, C-Z-2} proved a further generalization of Baker's result \cite{Baker} to polarized algebraic dynamical systems.

Now, there are two cases: either $\hat{h}_g(x)=0$, or not. We recall (see \cite{Call-Silverman}) that $\hat{h}_g(\cdot )$ is the canonical height (on the generic fiber) constructed with respect to the places of the function field $L_1/L_0$.

{\bf Case 1.} $\hat{h}_g(x)=0$.

In this case, because $x$ is not preperiodic, \cite{Baker} yields that $g$ is isotrivial and moreover, if $\mu:\P^1_{\bar{L_1}}\lra \P^1_{\bar{L_1}}$ is an automorphism such that $\mu^{-1}\circ g\circ \mu$ is defined over $\bar{L_0}$, then also $\mu(x)\in\P^1_{\bar{L_0}}$. Therefore,  all but finitely many places $\fp$ of the function field $L_1/L_0$ satisfy the conclusion of our hypothesis.

{\bf Case 2.} $\hat{h}_g(x)>0$.

In this case, let $h_C(\cdot )$ be a Weil height for the $\overline{L_0}$-points on $C$ associated to an ample divisor (of degree $1$). Then using \cite{Call-Silverman} (see also the proof of \cite[Proposition~6.2]{GTZ-Inventiones}), we obtain that each specialization at a place corresponding to a point $\fp$ of $C$ of sufficiently large height (i.e., $h_C(\fp)\gg 0$) guarantees the concluson of Claim~\ref{claim:nonpreperiodic}. Indeed, Call-Silverman \cite{Call-Silverman} proved that 
\begin{equation}
\label{eq:nonisotrivial}
\lim_{h_C(\fp)\to\infty} \frac{\hat{h}_{g_{\fp}}(x_{\fp})}{h_C(\fp)}=\hat{h}_g(x),
\end{equation}
where $\hat{h}_{g_{\fp}}(\cdot)$ is the canonical height associated to the specialization $g_\fp$ of $g$ at the place $\fp$ of good reduction for $g$ (here we construct $\hat{h}_{g_{\fp}}(\cdot )$ with respect to the Weil height on $\bar{L_0}$, since $L_0$ is either $\bar{\Q}$ or a function field over $\bar{\Q}$). Our assumption that $\hat{h}_g(x)>0$ coupled with equation \eqref{eq:nonisotrivial} yields that $\hat{h}_{g_\fp}(x_\fp)>0$ if $h_C(\fp)\gg 0$, and therefore $x_\fp$ is not preperiodic under the action of $g_{\fp}$ (since each preperiodic point would have canonical height equal to $0$; see \cite{Call-Silverman}).
\end{proof}

Applying Claim~\ref{claim:nonpreperiodic} repeatedly for each extension $B_{i+1}/B_i$ for $i<m$ yields the desired conclusion of Lemma~\ref{lemexistnonrepredu}.
\end{proof}
 
We return to the proof of Proposition~\ref{lemexiredu}. 
Lemma~\ref{lemexistnonrepredu} yields the existence of a point $c\in \Spec A$ such that $q_c$ is not preperiodic.
There exists a finite extension $L$ over $M$ such that $c$ is defined over $L$. 
By replacing $R$ by $R\otimes_{\cO_M}\cO_L$, we may assume that there exists a point $c\in \Spec R$ such that $\kappa(c)=L$ and $q_c\in \P^1_c(L)$ is not preperiodic.

By replacing $L$ by a suitable finite extension of $L$, we may assume that there exists a nonexceptional periodic point $\beta_c\in \P^1_c(L)$ whose orbit $\OO_{f_c}(\beta_c)$ does not meet 
$S_c.$

Observe that $c$ is a point on the generic fiber of $\Spec R\to \Spec \cO_L.$ The Zariski closure $Z_c$ of $c$ in $\Spec R$ is isomorphic to a Zariski open set $U$ of $\Spec \cO_L.$
Denote by $\delta: \P^1_R\to \Spec R$ the structure morphism.  Then $\delta^{-1}(Z_c)\to Z_c$ is isomorphic to the scheme $\P^1_{U}\to U.$ 
For any point $t\in Z_c$, denote by $\beta_t$ the specialization of $\beta_c$ in $\P^1_t$. Then $\beta_t$ is periodic and $\OO_{f_t}(\beta_t)$ is the specialization of $\OO_{f_c}(\beta_c)$.
By \cite[Lemma 4.1]{BGKT-MA}, there exists a point $e\in Z_c$  such that $\OO_{f_e}(\beta_e)\cap S_e=\emptyset$ and there exists $s>0$ such that $f_e^s(q_e)=\beta_e.$  In particular for all $n\geq s$, we have that  $f_e^n(q_e)\notin S_e.$

The image of $e$ in $U$ is a prime of $\cO_L$ which defines a nonarchimedean place $v$ of $L$.
Let $A_{\cO_{v}}:=R\otimes_{\cO_L}\cO_{v}.$ We may view $e$ as a point on the special fiber of $\Spec A_{\cO_{v}}\to \Spec \cO_v$ and $c$ as a point on the generic fiber of $\Spec A_{\cO_{v}}\to \Spec \cO_v$.   Let $d:=\dim A$ and denote by $A_{L_v}$ the generic fiber of $\Spec A_{\cO_{v}}\to \Spec \cO_v$. The set $V_e$ of points in $\Spec A_{L_v}(L_v)$ whose specialization is $e$ is a $v$-adic neighbourhood of $c$ and it is isomorphic to the polydisc $\cO_v^d.$
Denote by $A_L:=R\otimes_{\cO_L}L.$ We have an inclusion $\tau:A_L\hookrightarrow A_{L_v}.$ For any nonzero element $F\in A_L$, denote by $Z(F)$ the zeros of $F$ in $\Spec A_{L_v}(L_v).$ 

Since $A_L$ is countable, by Baire category theorem, $\cup_{F\in A_L\setminus \{0\}}Z(F)$ is nowhere dense in $A_{L_v}(L_v)$ with respect to the $v$-adic topology. Then there exists a point 
$$z\in V_e\setminus (\cup_{F\in A_L\setminus \{0\}}Z(F)).$$ 
The point $z$ defines a morphism $\chi_z:A_{L_v}\to L_v.$ 
The Zariski closure $Z_z$ of $z$ in $A_{\cO_v}$ is isomorphic to $\Spec \cO_v.$ Denote by 
$$\sigma:\P^1_{A_{\cO_v}}\times \A^N_{A_{\cO_v}}\to \Spec A_{\cO_v}$$ 
the structure morphism. Then $\sigma^{-1}(Z_z)\to Z_z$ is a scheme isomorphic to $\P^1_{\cO_v}\times \A^N_{\cO_v}\to \Spec \cO_v.$
The restriction of $f_R\otimes_{\cO_L} \cO_v$ on $\sigma^{-1}(Z_z)$ gives a rational self-map 
$$f_{\cO_v}:\P^1_{\cO_v}\times \A^N_{\cO_v}\dashrightarrow\P^1_{\cO_v}\times \A^N_{\cO_v},$$  
of the form $(x,y)\mapsto (g_{\cO_v}, A_{\cO_v}(x)(y))$.

We have a morphism $\iota:=\chi_z\circ\tau: A_{L}\to L_v.$ Since $z\not\in \cup_{F\in A_L\setminus \{0\}}Z(F)$,  we have $\ker\iota=0$ and then $\iota$ is injective. Then $\iota$ extends to an embedding from $K=\Frac R=\Frac A_L$ to $L_v.$ It follows that $f_{L_v}=f_K\otimes_KL_v$; this yields (i) from the conclusion of Proposition~\ref{lemexiredu}. The properties~(ii),(iii),(iv) hold  because such properties hold for all points in $\Spec R$. The property~(v) holds because the specialization of $z$ on the speical fiber is $e$ and for all $n\geq N$, we have that $f_e^n(q_e)\not\in S_e.$

This concludes our proof of Proposition~\ref{lemexiredu}.
\end{proof}

Now, we are ready to finish the proof of Theorem~\ref{thmdmlsenonlinear}.

We apply Proposition~\ref{lemexiredu} and therefore, by replacing $\alpha$ with  $f^r(\alpha)$ (as in property~(v) from the conclusion of Proposition~\ref{lemexiredu}), we may assume that the specialization of $\pi(f^n(\alpha))$ on the special fiber $\P^1_{\F_v}$ is not contained in $S_v$ (for each $n$). 

For each point $q\in \P^1(L_v)\times \A^N(L_v)$(respectively $q\in \P^1(L_v)$),  denote by $\bar{q}$ its specialization in $\P^1(\F_v)\times \A^N(\F_v)$ (respectively $\P^1(\F_v)$). At the expense of replacing $\alpha$ by $f^r(\alpha)$, we may suppose that $\overline{\pi(f^n(\alpha))}\notin S_v$ for all $n\geq 0$. It follows that $f_v^n(\bar{\alpha})=\overline{f^n(\alpha)}$ for all $n\geq 0.$ 

Because $\P^1(\F_v)$ is finite, at the expense of replacing $\alpha$ by $f^m(\alpha)$ for some $m\geq 0$ and also replacing $f$ by a suitable iterate, we may assume that $\bar{\alpha}$ is fixed by $f_v$. 
Then the set $V$ of points $q\in \P^1(L_v)\times \A^N(L_v)$ satisfying $\bar{q}=\bar{\alpha}$ is a $v$-adic open neighbourhood of $\alpha$. Moreover, $V$ is isomorphic to $\cO_v^{N+1}$ and it is also invariant by $f$. Then $f|_V$ induces an analytic self-map on $\cO_v^{N+1}.$ Since $f$ is \'etale along the fiber at $\overline{\pi(\alpha)}$, we have that $df_v(\alpha)$ is invertible. Then $f|_V$ is an isomorphism of $V$.  

Let $p>0$ be the characteristic of $F_v$. Without loss of generality, we may identify $V$ with $\cO_v^{N+1}$. At the expense of replacing $f$ by a suitable iterate, we may further assume that $f|_v= \id \mod p^2.$ 
By \cite[Theorem 1]{Poonen2014} (which uses the ideas introduced in \cite{Bell2010}), there exists a $v$-adic analytic map 
$$h:\cO_v\to V$$ such that for any $n\in \N$, we have $h(n)=f^n(\alpha).$ Then $h^{-1}(Y)$ is an analytic subset of $\cO_v$. Since $\cO_v$ is compact, $h^{-1}(Y)$ is either finite, or it is all of $\cO_v.$
If there exists an infinite sequence $n_1<n_2<\dots$ such that $f^{n_i}(\alpha)\in Y$,  then $h^{-1}(Y)$ is infinite and so, it must equal  $\cO_v$. Hence $f^n(\alpha)\in Y$ for all $n\geq 0.$ Since $Y$ is irreducible and $\OO_f(\alpha)$ is Zariski dense in $Y$, we obtain that $Y$ is periodic under $f$, which concludes our proof of Theorem~\ref{thmdmlsenonlinear}.

\subsection{The general case}
\label{subsec:2}

Since in Subsection~\ref{subsec:1} we proved Theorem~\ref{thmdmlsenonlinear} if $f$ is dominant, we are left now with the case $f:\P^1\times \A^N\dra\P^1\times \A^N$ is not dominant. 

For any $n\geq 0$, we let $\mathcal{I}_n:=f^n(\P^1_k\times \A^N_k)$. Then $\mathcal{I}_n\supseteq \mathcal{I}_{n+1}$ for each $n\ge 0$, and moreover, each $\mathcal{I}_n$ is a subbundle of the trivial vector bundle 
$$\pi:\P^1_k\times \A^N_k\to \P^1_k.$$ 
Let $Z:=\cap_{n\geq 0}\mathcal{I}_n$; then $Z$ is a subbundle of $\pi$ and furthermore, it is invariant under $f$. So, there exists $\ell\geq 0$ such that $$Z:=\cap_{n= 0}^\ell f^n(\P^1_k\times \A^N_k).$$ 
Also, we have that $f|_Z:Z\dashrightarrow Z$ is dominant. 
After replacing $\alpha$ by $f^\ell(\alpha)$, we may assume that $\alpha\in Z$. Since $\dim Y\geq 1$ and $\OO_f(\alpha)\cap Y$ is Zariski dense in $Y$, we have that  $Y\subseteq Z$. 

We note that $\pi|_Z:Z\to \P^1_k$ may be a nontrivial vector bundle on $\P^1_k$. So we may not be able to apply directly the results of Subsection~\ref{subsec:1} in order to conclude our proof. However, there exists a trivial line bundle $\pi':Z'\to \P^1_k$, a skew-linear rational self-map $f':Z'\dashrightarrow Z'$ and a birational map $\phi:Z\dashrightarrow Z'$ satisfying the following propertis:
\begin{itemize}
\item $\phi\circ f = f'\circ \phi$;
\item $\pi'\circ f'=g\circ \pi'$; 
\item $\pi|_Z=\pi'\circ\phi$; and 
\item on the generic fiber, $\phi$ is a linear isomorphism.  
\end{itemize}
Note that $I(\phi)$ is contained in a union $U_0$ of  finite fibers of $\pi|_Z$.

Since $\pi(\alpha)$ is not preperiodic, after replacing $\alpha$ by $f^m(\alpha)$ for a suitable $m\geq 0$, we may assume that $f^n(\alpha)\not\in U_0$ for all $n\geq 0.$ It follows that $Y\not\subseteq U_0$ and $f^{'n}(\phi(\alpha))\notin I(f')$ for all $n\geq 0.$ Then $\OO_{f'}(\phi(\alpha))\cap \phi(Y)$ is Zarski dense in $\phi(Y)$. Since $f'$ is dominant, the result proven in  Subsection~\ref{subsec:1} implies that $\phi(Y)$ is periodic under $f'$, which in turn, implies that $Y$ is periodic under $f$ (because $\phi$ is a birational map).  This concludes our proof of Theorem~\ref{thmdmlsenonlinear}.

 \begin{rem}
\label{rem:no extension}
We note that the proof of Theorem~\ref{thmdmlsenonlinear} does not work when $g$ is an automorprhism; we describe next the key point in the proof of Theorem~\ref{thmdmlsenonlinear}. Assume that $f$ and $\alpha$ are defined over $\Q$, $f:\P^1\times \A^N\lra \P^1\times \A^N$ is a skew-linear dominant, self-map such that $f^n(\alpha)\notin I(f)$ for all $n\geq 0$, and furthermore, assume $\pi(\alpha)$ is not preperiodic.  Then there exist a prime $p$, a $p$-adic polydisc $V\subset  \P^1(\Q_p)\times \A^N(\Q_p)$ and  integers $m,\ell\geq 1$ such that:
 \begin{points}
 \item $f^m(\alpha)\in V$; and
 \item $f^\ell$ is well defined on $V$ and moreover, $f^\ell(V)=V.$ 
 \end{points}

It is easy to show that if $f:\A^1\times \A^1\lra \A^1\times \A^1$ is the morphism defined by $(x,y)\mapsto (x+1, xy)$ and the starting point is  $\alpha=(0,0)$, then such a polydisc $V$ does not exist.
 \end{rem}

%%%%%%%%%%%%%%%%%%%%%%%%%%%%%%%%%%%%%%%%%%%%%%%%%%%%%%%%%%%%%%%%%%%%%%%%%%%%%%%
%%%%%%%%%%%%%%%%%%%%%%%%%%%%%%%%%%%%%%%%%%%%%%%%%%%%%%%%%%%%%%%%%%%%%%%%%%%%%%%

\section{Picard-Vessiot extensions inside the ring of sequences\\ Appendix by Michael Wibmer} \label{sec:appendix}

In this appendix we use Theorem \ref{thmdmlsenonlinear} (more specifically Corollary \ref{corsmlnonline}) to show that for certain linear difference equations there exists a Picard-Vessiot extension inside the ring of sequences. 

As in the previous sections we work over an algebraically closed field $k$ of characteristic zero. A \emph{difference ring (or field)} is a commutative ring (or field) $R$ together with a distinguished ring endomorphism $\s\colon R\to R$. A morphism of difference rings is a morphism of rings that commutes with the distinguished endomorphism. The \emph{constants} of a difference ring $R$ are $R^\s=\{a\in R|\ \s(a)=a\}$. For more background on difference algebra and linear difference equations the reader is referred to \cite{Levin:difference} and \cite{Put-Singer}.

As in \cite[Example 1.3]{Put-Singer} we consider the ring $\seq_k$ of sequences in $k$: two sequences $(a_n)_{n\in\N_0}$ and $(b_n)_{n\in\N_0}$ are identified if $a_n=b_n$ for $n\gg 0$. Addition and multiplication is componentwise and $\seq_k$ is considered as a difference ring by shifting a sequence one step to the left, i.e., $\s((a_n)_{n\in\N_0})=(a_{n+1})_{n\in\N_0}$.

Let $k(x)$ be the field of rational functions over $k$. We fix a non-constant endomorphism $g\colon \P^1_k\to\P^1_k$, i.e., $g\in k(x)\setminus k$ and an $\alpha\in\P^1_k(k)$ such that $\OO_g(\alpha)$ is infinite. Then $h(g^n(\alpha))\in k$ for $h\in k(x)$ and $n\gg0$. Thus we have a well-defined map
\begin{equation} \label{eq: map}
k(x)\to\seq_k,\  h\mapsto (h(g^n(\alpha)))_{n\in\N_0}.
\end{equation}
We consider $k(x)$ as a difference field via $\s(h(x))=h(g(x))$, so that (\ref{eq: map}) becomes a morphism of difference rings. In the sequel we will always consider $\seq_k$ as a difference ring extension of $k(x)$ via this embedding.

We are interested in linear difference equations $\s^\ell(y)+h_{\ell-1}\s^{l-1}(y)+\ldots+h_0y=0$ with $h_0,\ldots,h_{\ell-1}\in k(x)$, or more generally in first order system $\s(y)=Ay$ with $A\in M_{\ell,\ell}(k(x))$. The Galois theory of linear difference equations (also known as Picad-Vessiot theory) associates a linear algebraic group to such an equation. This Galois group measures the algebraic relations among the solutions of the linear difference equation.
The Galois group can be defined as the automorphism group of a Picard-Vessiot extension for the linear difference equation. The standard reference for the Galois theory of linear difference equations is \cite{Put-Singer}. There it is always assumed that $\s$ is an automorphism. In our situation, this assumption is only satisfied if $\deg(g)=1$. However, the assumption that $\s$ is an automorphism is not necessary for developing the Galois theory of linear difference equations. (See e.g., \cite{Wibmer:OnTheGaloisTheoryOfStronglyNormal} and \cite{Maier:Nori}).

It is a characteristic of the Galois theory of difference equations that Picard-Vessiot extensions may contain zero divisors. Instead of difference field extensions of $k(x)$ one considers certain difference rings that have field-like properties: a difference ring $L$ is a \emph{difference pseudo field} if
\begin{enumerate}
	\item $L$ is $\s$-simple, i.e., if $I$ is an ideal of $L$ with $\s(L)\subseteq L$, then $L=\{0\}$ or $L=R$,
	\item every non-zero-divisor of $L$ is a unit of $L$ and
	\item $L$ is Noetherian.
\end{enumerate}

One can show (\cite[Prop. 1.3.2]{Wibmer:thesis}) that every difference pseudo field $L$ is of the form $L=e_1L\oplus\ldots\oplus e_rL$, where $e_1,\ldots,e_r$ are orthogonal idempotent elements of $L$ such that
\begin{enumerate}
%	\item $e_ie_j=0$ for $i\neq j$,
%	\item $e_1+\ldots+e_r=1$,
	\item $\s(e_1)=e_2,\s(e_2)=e_3,\ldots,\s(e_n)=e_1$ and
	\item the $e_i L$'s are fields.	
\end{enumerate}

Conversely, any difference ring of the form  $L=e_1L\oplus\ldots\oplus e_rL$ with (1) and (2) satisfied, is a difference pseudo field.

If $R$ is a $k(x)$-algebra and $C\subseteq R$ we define $$k(x)(C)=\{ab^{-1}|\ a,b\in k(x)[C],\ b \text{ is a unit in } R\}\subseteq R.$$ In particular, if $R$ is a field extension of $k(x)$, then $k(x)(C)$ is the field extension of $k(x)$ generated by $C$.

\begin{defi}
	Let $A\in\Gl_\ell(k(x))$. A difference pseudo field extension $L/k(x)$ is a \emph{Picard-Vessiot extension} for $\s(y)=Ay$ if
	\begin{enumerate}
		\item $L$ is generated by the entries of a fundamental solution matrix, i.e., there exists $Y\in\Gl_\ell(L)$ with $\s(Y)=AY$ such that $L=k(x)(Y_{ij})$ and  
		\item $L^\s=k(x)^\sigma(=k)$.
	\end{enumerate}	
\end{defi}

Since for any $A\in\Gl_\ell(k(x))$ the linear difference equation $\s(y)=Ay$ has a fundamental solution matrix in $\seq_k$ (cf. proof of Theorem \ref{theo:existsPV} below) and moreover $(\seq_k)^\sigma=k$, it is natural to ask if there exists a Picard-Vessiot extension for $\s(y)=Ay$ inside $\seq_k$. In \cite{Wib-JEMS} this question was considered for the special case $g(x)=x+1$ and $\alpha=0$. While the problem remains open in this important special case, the answer is affirmative if $\deg(g)\geq 2$.

%We fix $g\in k(x)??$ of degree strictly larger than one and we consider $k(x)$ as a $\s$-field via $\s(f(x))=f(g(x))$. We also fix a point $a\in k$ that is not pre-periodic for $g$, i.e., the elements $a, g(a), g^2(a),\ldots$ of $k$ are all distinct. Since a rational function only has finitely many poles, the map $k(x)\to\seq_k,\ f\mapsto (f(a), f(g(a)),f(g^2(a)),\ldots)$ is a well-defined a morphism of difference rings.

%
%
%\begin{thm}
%	there exists is a finite union of arithmetic progressions.
%\end{thm}
%
%\begin{cor}
%	If $(a_n)_{n\in\nn}\in \seq_k$ satisfies a linear difference equation over $k(x)$, then $\{m\in \nn|\ a_m=0\}$ is a finite union of arithmetic progressions.
%\end{cor}
%\begin{proof}
%	Assume ??
%\end{proof}

%
%\begin{cor}
%	If $a\in \seq_k$ satisfies a linear difference equation over $k(x)$, then either $a$ is a unit in $\seq_k$ or $a\s(a)\ldots\s^m(a)=0$ for some $m\geq 1$.
%\end{cor}
%\begin{proof}
%	A
%\end{proof}

\begin{thm} \label{theo:existsPV} If $\deg(g)\geq 2$, then for any $A\in\Gl_\ell(k(x))$ there exists a Picard-Vessiot extension for $\s(y)=Ay$ in $\seq_k$.
\end{thm}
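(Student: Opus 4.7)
The plan is to construct the Picard--Vessiot extension explicitly as a subring of $\seq_k$, built from an iteratively defined fundamental solution matrix, and then to verify the difference-pseudo-field axioms by reducing each of them to a dynamical statement falling in the scope of Theorem~\ref{thmdmlsenonlinear} (equivalently, its consequence Corollary~\ref{corsmlnonline}).

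Starting from any $Y(0)\in\Gl_\ell(k)$ (for instance the identity), I would define $Y(n+1):=A(g^n(\alpha))\,Y(n)$ whenever $A(g^n(\alpha))$ is defined and invertible. Because $\det A\in k(x)^\ast$ has only finitely many zeros and poles and $\OO_g(\alpha)$ is infinite, this is the case for all $n$ sufficiently large, and filling in the remaining finitely many values arbitrarily does not change the class in $\seq_k$. The resulting matrix $Y=(Y_{ij})\in \Gl_\ell(\seq_k)$ satisfies $\s(Y)=AY$. Set $R:=k(x)[Y_{ij},\det(Y)^{-1}]\subseteq\seq_k$ and let $L:=k(x)(Y_{ij})$ be its localization at those elements of $R$ which are units in $\seq_k$; the formula $\s(Y_{ij})=\sum_m A_{im}Y_{mj}$ makes both $R$ and $L$ $\s$-stable, and $L$ is the proposed Picard--Vessiot extension. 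Three of the required properties are essentially immediate: $L$ is generated as required by its very construction; any $\s$-invariant germ is eventually constant, so $(\seq_k)^\s=k$ and therefore $L^\s=k$; and $L$ is Noetherian, being a localization of the finitely generated $k(x)$-algebra $R$.

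The heart of the argument is to show that $L$ is $\s$-simple and that every non-zero-divisor of $L$ is a unit. This rests on the following key lemma: for any nonzero $f\in R$, the zero set $Z(f):=\{n\ge 0:f_n=0\}$ is a finite union of arithmetic progressions. Indeed, writing $f=\tilde f(x,Y)$ for a polynomial $\tilde f\in k(x)[Z_{ij}]$ in formal matrix entries, $Z(f)$ coincides up to a finite discrepancy with the return set $\{n\ge 0:\Phi^n(\alpha,Y(0))\in V(\tilde f)\}$ of the skew-linear self-map $\Phi(x,Z):=(g(x),A(x)Z)$ on $\P^1_k\times\A^{\ell^2}_k$. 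Theorem~\ref{thmdmlsenonlinear}, upgraded by the standard Noetherian induction from its Zariski-density form to the full ``finite union of arithmetic progressions'' conclusion of the Dynamical Mordell--Lang Conjecture (the same upgrade used to derive Corollary~\ref{corsmlnonline}), then yields the claim. The hypothesis $\deg g\ge 2$ is essential here.

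Given the lemma, if $p\in R$ is nonzero and $Z(p)$ is infinite, let $d$ be a common period of the progressions forming $Z(p)$; then $\prod_{i=0}^{d-1}\s^i(p)=0$ in $\seq_k$, since any $n\gg 0$ lies in some shift $Z(p)-i$ with $0\le i\le d-1$. Because $R$ is reduced (as a subring of the reduced ring $\seq_k$) and Noetherian, this vanishing product forces one factor into each minimal prime of $R$, and one then upgrades this to exhibit $p$ itself as a zero-divisor in $R$. Consequently any non-zero-divisor $f=p/q\in L$ has $Z(p)$ finite, so $p$ is a unit in $\seq_k$ and $f^{-1}\in L$; and any nonzero $\s$-stable ideal of $L$ contains a non-zero-divisor (by Noetherianity), hence a unit, yielding $\s$-simplicity. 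The main obstacle is exactly the upgrade from ``some $\s^i(p)$ is a zero-divisor'' to ``$p$ itself is a zero-divisor'': this requires showing that $\s$ permutes the minimal primes of $R$ (cyclically on each orbit), using the injectivity of $\s$ together with the invertibility of $A$ over $k(x)$, and it ultimately produces the decomposition $L=e_1L\oplus\cdots\oplus e_r L$ into fields which is the defining structure of a difference pseudo field.
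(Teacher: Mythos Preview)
Your outline is correct and matches the paper's proof in architecture: build $Y$ recursively, set $R=k(x)[Y_{ij},1/\det Y]\subseteq\seq_k$, use the dynamical Mordell--Lang input to control the zero sets of elements of $R$, deduce that every non-zero-divisor of $R$ is a unit in $\seq_k$, and then read off the difference-pseudo-field structure of $L$. Two places where the paper is more streamlined are worth noting.

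First, rather than applying Theorem~\ref{thmdmlsenonlinear} to the $\ell^2$-dimensional system $\Phi(x,Z)=(g(x),A(x)Z)$, the paper invokes Corollary~\ref{corsmlnonline} directly, after the observation that every element of $R$ satisfies a \emph{scalar} linear difference equation over $k(x)$: the $k(x)$-span of the $Y_{ij}$ together with $1/\det Y$ is finite-dimensional and $\s$-stable, and the class of sequences lying in some finite-dimensional $\s$-stable subspace is closed under sums and products. Your route is equally valid; it just bypasses this observation at the cost of the Noetherian-induction upgrade you mention.

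Second, what you flag as the ``main obstacle'' --- passing from $\prod_{i=0}^{d-1}\s^i(p)=0$ to ``$p$ is a zero-divisor in $R$'' via an analysis of how $\s$ acts on the minimal primes of $R$ --- dissolves once you use that $\s$ is an \emph{automorphism} of the ambient ring $\seq_k$. If $p$ were a non-zero-divisor of $R$, the cofactor $\s(p)\cdots\s^{d-1}(p)$ would vanish; applying $\s^{-1}$ in $\seq_k$ yields $p\,\s(p)\cdots\s^{d-2}(p)=0$, again a relation among elements of $R$, and iterating drives the length down to one, forcing $p=0$. The paper does this in a single line. The pseudo-field structure then comes for free: $L$ is the total ring of fractions of the reduced Noetherian ring $R$, hence a finite product of fields $e_1L\oplus\cdots\oplus e_rL$; since $\s$ is injective on $L$ it permutes the primitive idempotents, and the permutation must be a single $r$-cycle because the sum of the idempotents in any cycle would be a nontrivial idempotent in $L^\s\subseteq(\seq_k)^\s=k$. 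No separate minimal-prime bookkeeping on $R$ is needed.
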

\begin{proof}
	Let us first show that there exists a fundamental solution matrix $Y\in\Gl_\ell(\seq_k)$. We may choose $n_0\in\nn$ such that no entry of $A$ has a pole in $\{g^{n_0}(\alpha), g^{n_0+1}(\alpha),\ldots\}$ and $\det(A)$ is non-zero on $\{g^{n_0}(a), g^{n_0+1}(a),\ldots\}$. We set $Y(n_0)=I_\ell$ and for $n\geq n_0$ we define recursively $Y(n+1)=A(g^n(\alpha))Y(n)$. 
	Since $\det(Y(n))\neq 0$ for $n\geq n_0$ we see that $Y\in\Gl_\ell(\seq_k)$. Moreover, $\s(Y)=AY$ by construction.
	
	Since $(\seq_k)^\s=k$, it suffices to show that $L=k(x)(Y_{ij})\subseteq \seq _k$ is a 
	difference pseudo field. For this, the crucial step is to show that a non-zero-divisor $a$ of $R=k(x)[Y_{ij},1/\det(Y)]$ is a unit in $\seq_k$.

	A sequence $a\in\seq_k$ satisfies a linear difference equation over $k(x)$ if and only if it is contained in a finite dimensional  $\sigma$-stable $k(x)$-subspace of $\seq_k$. It follows that sums and products of sequences that satisfy a linear difference equation also satisfy a linear difference equation. The $k(x)$-subspace of $\seq_k$ generated by $Y_{ij},\ 1\leq i,j\leq \ell$ and $1/\det(Y)$ is stable under $\s$. Therefore, every element $a\in R=k(x)[Y_{ij},1/\det(Y)]$ satisfies a linear difference equation over $k(x)$.
	
	Suppose $a$ is a non-zero-divisor of $R$ but not a unit in $\seq_k$. Then $\{n\in\nn_0|\ a_n=0\}$ is an infinite set, since otherwise $a$ would be a unit in $\seq_k$. According to Corollary~ \ref{corsmlnonline}, the set $\{n\in\nn_0|\ a_n=0\}$ contains an arithmetic progression of the form $c+d\nn_0$ with $c,d\in\nn_0$ and $d\geq 1$. Therefore $a\s(a)\ldots\s^d(a)=0$. Since $R$ is stable under $\s$, this contradicts our assumption that $a$ is a non-zero-divisor of $R$.
Thus we have shown that every non-zero-divisor of $R=k(x)[Y_{ij},1/\det(Y)]\subseteq\seq_k$ is a unit in $\seq_k$. 

It follows that $L=k(x)(Y_{ij},1/\det(Y))=k(x)(Y_{ij})$ agrees with the total ring of fractions of $R$, i.e., $L$ is the localization of $R$ by all non-zero-divisors of $R$. Since $R$ is reduced, this implies that $L$ is a finite direct sum of fields. Say $L=e_1L\oplus\ldots\oplus e_rL$ with $e_1,\ldots,e_r$ orthogonal idempotent elements of $L$ and the $e_iL$'s are fields.

Note that $L$ is stable under $\s\colon \seq_k\to\seq_k$ because $\s$ maps units to units. Moreover, $\s$ is an automorphism of $\seq_k$, so in particular it is injective on $L$ and therefore induces a permutation on the primitive idempotent elements $e_1,\ldots,e_r$ of $L$. If $(i_1\ldots i_s)$ is a cycle occurring in the cycle decomposition of this permutation, then $e_{i_1}+\ldots+e_{i_s}$ is a constant idempotent element. However, the only constant idempotent elements of $\seq_k$ are $1$ and $0$. This shows that the permutation induced by $\s$ on $e_1,\ldots,e_r$ is an $r$-cycle. So after renumbering the $e_i$'s if necessary we have $\s(e_1)=e_2,\ldots,\s(e_r)=e_1$. Thus $L$ is a difference pseudo field.
\end{proof}

%%%%%%%%%%%%%%%%%%%%%%%%%%%%%%%%%%%%%%%%%%%%%%%%%%%%%%%%%%%%%%%%%%%%%%%%%%%%%%%
%%%%%%%%%%%%%%%%%%%%%%%%%%%%%%%%%%%%%%%%%%%%%%%%%%%%%%%%%%%%%%%%%%%%%%%%%%%%%%%

%%%%%%%%%%%%%%%%%%%%%%%%%%%%%%%%%%%%%%%%%%%%%%%%%%%%%%%%%%%%%%%%%%%%%%%%%%%%%%
%%%%%%%%%%%%%%%%%%%%%%%%%%%%%%%%%%%%%%%%%%%%%%%%%%%%%%%%%%%%%%%%%%%%%%%%%%%%%%

%\newpage
%\bibliography{dd}

\end{document}